\documentclass[11pt]{article}
\usepackage{amsmath}
\usepackage{amsfonts}
\usepackage{graphicx}
\usepackage{amssymb}
\usepackage{leftidx}
\usepackage{color}
\usepackage{bbm}
\usepackage{mathrsfs}

\allowdisplaybreaks

\newtheorem{condition**}{A*}
\newtheorem{condition***}{C*}
\newtheorem{condition*}{C}

\newtheorem{definition}{Definition}[section]
\newtheorem{theorem}{Theorem}[section]
\newtheorem{lemma}{Lemma}[section]
\newtheorem{remark}{Remark}[section]

\begin{document}

\title{Optimal Transport for Continuous-Time Mean-Variance Portfolio Selection} 

\author{Yijie Zeng$^{2}$, \quad 
Liangquan Zhang$^{1}$\thanks{The corresponding author. 
Liangquan Zhang acknowledges the financial support partly by the National Nature
Science Foundation of China (Grant No. 12571486, 12171053) and the
Fundamental Research Funds for the Central Universities, and the Research
Funds of Renmin University of China (No. 23XNKJ05).}, \quad
Xun Li$^{2}$\thanks{
Xun Li is supported by the Research Grants Council of Hong Kong under grants
15225124 and 15237226. E-mail: li.xun@polyu.edu.hk.} \\
{\small 1. School of Mathematics } \\
{\small Renmin University of China, Beijing 100872, China.} \\
{\small 2. Department of Applied Mathematics } \\
{\small \ The Hong Kong Polytechnic University, Hong Kong, China.}}

\maketitle

\begin{abstract}

The continuous-time mean-variance portfolio selection
problem is investigated in this paper through a rigorous integration of
backward stochastic differential equation (BSDE) and Kantorovich duality
theory. We propose a novel analytical framework for addressing feedback
control in McKean-Vlasov stochastic systems. By solving three coupled
systems of ordinary differential equations (ODEs), we derive an explicit,
closed-loop optimal feedback investment strategy.
\end{abstract}

\noindent \textbf{AMS subject classifications:} 93E20, 60H15, 60H30.

\noindent \textbf{Key words: }Backward stochastic differential equations,
Mean-variance portfolio selection, Maximum Principle, Stochastic LQ control,
Wasserstein space.

\section{Introduction}\label{sect:1}

\paragraph{Markowitz Mean-Variance (MV) Model.}

The foundational work on portfolio selection theory was introduced by Harry Markowitz in 1952 (cf. \cite{m52, m59}), in which he formally proposed the ``Mean-Variance Model" a milestone that inaugurated modern investment theory. The core objective of this model is to minimize
investment risk under a given expected return level, or to maximize expected
return under a given risk level, by quantifying the ``return-risk" trade-off
relationship, providing a scientific basis for multi-asset portfolio
allocation. Return: Measured by the expected return (mean) of the asset
portfolio, reflecting the average level of future returns of the portfolio;
Risk: Measured by the variance (or standard deviation) of the asset
portfolio's return, reflecting the degree of fluctuation (the greater the
fluctuation, the higher the risk). The core logic of diversification
investment: Portfolio risk not only depends on the risk of individual assets
(variance), but also on the correlation (covariance) between assets. By
allocating assets with lower correlation, the overall risk of the portfolio
(i.e., ``non-systematic risk diversification") can be reduced without
lowering the expected return.

The derivation of the model is based on the following simplified
assumptions, which are also the root cause of its subsequent limitations:
Asset returns follow a normal distribution (only the first two moments- mean
and variance/covariance are sufficient to fully characterize the
distribution); Investors are risk-averse: Preferring combinations with lower
risk for the same return, and preferring combinations with higher return for
the same risk;

Investment decisions are single-period static optimization: Considering only
one investment period, without dynamic adjustment and transaction costs; No
frictional market: No transaction fees, taxes, liquidity costs, and assets
can be infinitely subdivided (supporting any proportion allocation);
Investors only focus on return (mean) and risk (variance), ignoring
higher-order moments (skewness, kurtosis) and other non-quantifiable factors.

Let us briefly introduce the basic mathematical modeling framework:

\begin{itemize}
\item Asset set: $n$ types of assets, weight vector $\boldsymbol{w}=(w_{1},w_{2},...,w_{n})^{T}$ (where $w_{i}\geq 0$ is the allocation
proportion of the $i$-th asset, $\sum_{i=1}^{n}w_{i}=1$;

\item Individual asset: The expected return of the $i$-th asset $\mu _{i}=\mathbb{E}[r_{i}]$, variance $\sigma _{i}^{2}=\text{Var}(r_{i})$;

\item Asset correlation: The covariance between the $i$-th and $j$-th assets
$\sigma _{ij}=\text{Cov}(r_{i},r_{j})$, covariance matrix $\boldsymbol{\Sigma}=(\sigma _{ij})_{n\times n}$;

\item Portfolio's Return and Risk Portfolio expected return (mean): $\mu
_{p}=\boldsymbol{w}^{T}\boldsymbol{\mu }=\sum_{i=1}^{n}w_{i}\mu _{i}$
(weighted sum of the expected returns of individual assets); Combined
variance (risk): $\sigma _{p}^{2}=\boldsymbol{w}^{T}\boldsymbol{\Sigma}\boldsymbol{w}=\sum_{i=1}^{n}w_{i}^{2}\sigma _{i}^{2}+2\sum_{1\leq i<j\leq
n}w_{i}w_{j}\sigma _{ij}$ (including the variance term of individual assets
and the covariance term between assets).
\end{itemize}

The essence of the optimization objective and the constraint model is a
quadratic convex programming problem, and there exists a unique global
optimal solution.

\begin{itemize}
\item Objective 1: Given the expected return $\mu ^{\ast }$, minimize the risk
\begin{equation*}
\left\{
\begin{array}{l}
\min \sigma _{p}^{2}=\boldsymbol{w}^{T}\boldsymbol{\Sigma w}, \\
\text{s.t.}\quad \boldsymbol{w}^{T}\boldsymbol{\mu }=\mu ^{\ast }, \\
\sum_{i=1}^{n}w_{i}=1, \\
w_{i}\geq 0\quad (w_{i}\in \mathbb{R},\text{ short selling is allowed}). 
\end{array}\right.
\end{equation*}

\item Objective 2: Given the risk $\sigma ^{\ast }$, maximize the expected
return
\end{itemize}

\begin{equation*}
\left\{
\begin{array}{l}
\max \mu _{p}=\boldsymbol{w}^{T}\boldsymbol{\mu }, \\
\text{s.t.}\quad \boldsymbol{w}^{T}\boldsymbol{\Sigma w}=\sigma ^{\ast 2}, \\
\sum_{i=1}^{n}w_{i}=1, \\
w_{i}\geq 0\quad (w_{i}\in \mathbb{R},\text{ short selling is allowed}). 
\end{array}%
\right.
\end{equation*}
\ \quad \quad

Solving the above optimization problem yields a series of \textquotedblleft
risk-return\textquotedblright\ optimal portfolios. The curve formed by these
portfolios is called the \textquotedblleft efficient frontier\textquotedblright. The portfolios on the
efficient frontier are \textquotedblleft non-dominated\textquotedblright\ --- there is no other portfolio that
\textquotedblleft has lower risk while having higher return\textquotedblright, or \textquotedblleft has higher return while
having lower risk\textquotedblright; portfolios below the efficient frontier are \textquotedblleft inefficient\textquotedblright\ (they can be optimized by adjusting weights).

\paragraph{Continuous MV Model.}

Consider a finance market with $d+1$ assets which evolve continuously on $[0,T]$. One asset is \emph{riskless}, for instance, bond, whose price
denoted by $S_{0}(t),\;t\geq 0$, is driven by ODE
\begin{equation}
\left\{\begin{aligned} \mathrm{d} S_0(t) 
& = r(t) S_0(t) \mathrm{d} t, \quad t \in[0, T], \\ S_0(0) & =s_0>0, \end{aligned}\right.  \label{b}
\end{equation} 
where $r(t)>0,$ denote the interest rate of the bond.

The other assets are \emph{risky}, for instance, stocks, and their prices
are governed by
\begin{equation}
\left\{
\begin{array}{lll}
\mathrm{d}S_{i}\left( t\right) 
& = & S_{i}\left( t\right) \left\{b_{i}\left( t\right) \mathrm{d}t+\sum_{j=1}^{m}\sigma _{ij}\left( t\right)\mathrm{d}W^{j}\left( t\right) \right\} ,\text{ }t\in \left[ 0,T\right], \\
S_{i}(0) & = & s_{i} > 0,
\end{array}\right.  \label{s}
\end{equation}
where $b_{i}(t)>r(t),$ $t\in \lbrack 0,T]$ denotes the appreciation rate and
$\sigma _{ij}(t)$ is the volatility term. Put $b:=\left( b_{1},\cdots,b_{d}\right) ^{\top}$ and $\sigma _{i}(t):=\left( \sigma _{i1}(t),\ldots,\sigma _{im}(t)\right)$.

Define the volatility matrix
\begin{equation*}
\sigma (t) = \left(\begin{array}{c}
\sigma _{1}(t) \\
\vdots \\
\sigma _{d}(t)
\end{array}\right) 
= \left( \sigma _{ij}(t)\right) _{m\times m}\text{ }.
\end{equation*} 
We assume throughout that $r,$ $b,$ and $\sigma $ are deterministic
constants. In addition, we impose that the non-degeneracy condition $\sigma
(t)\sigma ^{\top }(t)\geq \delta I,$ $\forall t\in \lbrack 0,T],$ where $\delta >0$ is a given constant, is satisfied. 

Consider an investor with an initial wealth $x_{0}>0$ and the her or his
total wealth $X(t)$ (see, e.g., \cite{lz2000}) is driven by
\begin{equation}
\left\{ \begin{aligned} \mathrm{d} x(t) 
= & \left\{r(t)x(t)+\sum_{i=1}^m\left(b_i(t)-r(t)\right) u_i(t)\right\} \mathrm{d} t \\ 
& +\sum_{j=1}^m \sum_{i=1}^m \sigma_{i j}(t) u_i(t) \mathrm{d} W^j(t), t\in[0, T], \\ 
x(0)= & x_0 , \end{aligned}\right.  \label{w}
\end{equation}
where $\left\{ u_{i}(t)\right\} _{i=0,1,\cdots ,m}$ stands for the total
market value of the agent's wealth in the $i$-th bond/stock. In the
literature $u(t):=(u_{1}(t),\cdots ,u_{m}(t))$ is called the portfolio.

According to mean-variance portfolio selection theory, the risk measure is defined by
\begin{equation*}
J(u)=\frac{\lambda }{2}\mathbb{D}[X(T)]-\mathbb{E}[X(T)],
\end{equation*}
where $\lambda >0$ and $\mathbb{D}[X(T)]$ denotes the variance of random
variable $X(T)$. The aim is to seek a permissible investment strategy (a
dynamic portfolio fulfilling all constraints) such that $J(u)$ is minimized.

\begin{definition}
\label{defi:control-positive} We say a portfolio $u(\cdot )$ is admissible
whenever $u(\cdot )\in L_{\mathcal{F}}^{2}(0,T;\mathbb{R}^{m})$.
\end{definition}

The mean--variance portfolio selection problem is formulated as the
following optimization problem
\begin{equation}
\begin{array}{rl}
\min & J\left( u\right) , \\
\mbox{{\rm subject to}} & \left\{
\begin{array}{l}
u(\cdot )\in L_{\mathcal{F}}^{2}(0,T;\mathbb{R}^{m}), \\
(X(\cdot ),u(\cdot ))~\hbox{\rm admit (\ref{w})}.%
\end{array}%
\right.%
\end{array}
\label{eq:utility-positive}
\end{equation} 
Moreover, the optimal control of (\ref{eq:utility-positive}) denoted by $u^{\ast }$ is called an efficient strategy, $J\left( u^{\ast }\right) $ is
the optimal value of (\ref{eq:utility-positive}) corresponding to $u^{\ast }$. 

The contributions of this paper can be summarized as follows: By quantifying
the overall differences in the distribution rather than merely focusing on
the first two moments, it is possible to more accurately control the extreme
risks in non-normal markets.

The cost functional is
\begin{equation*}
\mathcal{J}\left( u\right) =\mathcal{W}_{2}\left( \mu _{T},\mu _{A}\right)^{2}.
\end{equation*} 
The main academic limitation of the Markowitz MV model lies in its reliance
on the strong assumption that \textquotedblleft asset returns follow a normal distributio\textquotedblright\ ---
under this assumption, the distribution can be fully characterized by the
mean (first moment) and variance (second moment), simplifying the
optimization logic. However, this assumption has long been disproved in
academic empirical studies (as financial asset returns typically exhibit
non-normal characteristics such as sharp peaks, heavy tails, and skewness).

The academic breakthrough of the Wasserstein distance lies in:

\begin{itemize}
\item \emph{Full distribution characterization without distribution
assumptions}: The definition of Wasserstein distance does not rely on any
specific distribution form. It quantifies distribution differences through
\textquotedblleft coupling of probability measures\textquotedblright ,
providing an academic proof that \textquotedblleft portfolio optimization
can be independent of the normal distribution assumption\textquotedblright ,
upgrading the theoretical basis of the MV model from \textquotedblleft local
moment optimization under specific distributions\textquotedblright\ to
\textquotedblleft full distribution optimization under arbitrary
distributions\textquotedblright .

\item \emph{Solving the academic dilemma of} \textquotedblleft \emph{missing
moment information}\textquotedblright : In non-normal distributions,
higher-order moments (skewness, kurtosis) have significant impacts on
risk-reward characteristics, but traditional MV models cannot incorporate
these information (otherwise, it would lead to \textquotedblleft moment
explosion\textquotedblright\ and the optimization problem would be
unsolvable). The Wasserstein distance naturally integrates all distribution
information (including higher-order moments and tail characteristics)
through \textquotedblleft minimum transportation cost\textquotedblright ,
constructing a theoretical path \textquotedblleft without separately
modeling higher-order moments, yet covering all distribution
characteristics\textquotedblright , avoiding the mathematical complexity of
higher-order moment optimization.
\end{itemize}

Academic significance: It corrects the core premise defect of the MV model,
making the assumptions of portfolio selection theory more in line with the
empirical characteristics of the financial market, and promoting the
evolution of the MV model from \textquotedblleft theoretical model under ideal conditions\textquotedblright\ to \textquotedblleft rigorous framework under real distribution\textquotedblright.

\paragraph{Wasserstein distance in deep learning.}

The control of probability distributions holds significant potential across
a range of applications, including autonomous vehicle navigation \cite{ot19}
and the management of car-sharing systems \cite{hs21}. Moreover, this
concept is expected to play a valuable role in advancing deep learning
methodologies. A central challenge in deep learning involves the effective
modeling and manipulation of data and their underlying distributions. In
particular, generative modeling---exemplified by approaches such as
generative adversarial networks \cite{gpmx14, acb17}, normalizing flows \cite{rm15}, and diffusion models \cite{sskk20}---aims to construct probability
distributions that closely approximate empirical data distributions.
Consequently, the design of neural networks can be interpreted as an
optimization process over the probability distributions induced by the
network architecture. Recent studies have explored generative models from
the standpoint of stochastic control theory \cite{zc21, lct22, bru22},
further establishing connections between distributional control and learning
dynamics.

In deep learning, the Wasserstein distance is commonly employed as a loss
function and is minimized during the training process \cite{acb17}.
Furthermore, generative modeling networks can be formulated as dynamical
systems, as illustrated in the study of neural ordinary differential
equations \cite{cybd18}. In this framework, determining the network weights
can be interpreted as identifying an appropriate control for a dynamical
system. Consequently, minimizing the Wasserstein distance in deep learning
can be conceptualized as an optimal control problem governed by the
Wasserstein metric. This formulation of optimal control with respect to the
Wasserstein distance is expected to provide a theoretical foundation for
generative modeling.

\paragraph{Feedback control.}

Feedback Control is the core paradigm of a closed-loop control system. Its
core logic involves a cyclic mechanism of real-time measurement of system
output--comparison with the target value--dynamic adjustment of input based
on the deviation, which is used to correct system disturbances and track the
preset target (cf. Wiener, 1948, \cite{wiener}, Chapter IV, Feedback and
Oscillation). It is the fundamental technical framework in modern control
theory, engineering systems, financial management, biological regulation,
etc. Unlike "open-loop control" (which only follows preset instructions
without output correction), the core value of feedback control lies in
anti-interference, stable dynamics, and adaptability, which is highly
consistent with what you are concerned about, such as optimal control,
dynamic combination optimization, and robust design.

Feedback control provides a closed-loop dynamic adjustment methodology for
dynamic MV models and distributed robust combined optimization, addressing
the core deficiencies of traditional static models. It is an important
technical support on the extended Wasserstein distance MV model --- by
real-time correcting distribution deviations through feedback control, it
achieves the dual goals of full distribution robustness and dynamic
stabilization control.

The remainder of the paper is organized as follows. Section \ref{sect2}
presents preliminary results, reformulating the problem by involving
Kantorovich duality theorem. In Section \ref{sect3}, we derive the optimal
feedback control strategy and groups of related ODEs using the maximum
principle by Lions derivative. Concluding remarks are provided in Section \ref{sect4}.

\section{Preliminaries}

\label{sect2}

\subsection{Notation}

We make use of the following notation:

\begin{tabular}{crl}
$M^{\top }$ & : & the transpose of any matrix or vector $M$; \\
$\Vert M\Vert $ & : & $\sqrt{\sum_{i,j}m_{ij}^{2}}$ for any matrix or vector
$M=(m_{ij})$; \\
$\mathbb{R}^{n}$ & : & $n$ dimensional real Euclidean space; \\
$\mathbb{R}_{+}^{n}$ & : & the subset of $\mathbb{R}^{n}$ consisting of
elements with nonnegative components.%
\end{tabular}

The underlying uncertainty is generated by a fixed filtered complete
probability space $(\Omega ,\mathcal{F},\mathbb{P},\{\mathcal{F}%
_{t}\}_{t\geq 0})$ on which is defined a standard $\{\mathcal{F}%
_{t}\}_{t\geq 0}$-adapted $m$-dimensional Brownian motion $W(t)\equiv
(W^{1}(t),\cdots ,W^{m}(t))^{\top }$. $\mathbb{F}=\left( \mathcal{F}%
_{t}\right) _{0\leq t\leq T}$ is the natural filtration generated by $W.$
Given a probability space $(\Omega ,\mathcal{F},\mathbb{P})$ with a
filtration $\{\mathcal{F}_{t}|a\leq t\leq b\},$ $(-\infty \leq a<b\leq
+\infty )$, a Hilbert space $\mathcal{H}$ with the norm $\Vert \cdot \Vert _{%
\mathcal{H}}$, define the Banach space
\begin{equation*}
L_{\mathcal{F}}^{2}(0,T;\mathcal{H})=\left\{ \varphi (\cdot )\bigg\vert%
\begin{array}{l}
\varphi (\cdot )\mbox{ is an }\mathcal{F}_{t}\mbox{-adapted, }\mathcal{H}%
\mbox{-valued measurable process on }\lbrack a,b] \\
\mbox{and }\mathbb{E}\left[ \int_{a}^{b}\Vert \varphi (t)\Vert _{\mathcal{H}%
}^{2}dt\right] <+\infty%
\end{array}%
\right\}
\end{equation*}%
with the norm
\begin{equation*}
\Vert \varphi (\cdot )\Vert _{\mathcal{F},2}=\Big(\mathbb{E}\left[
\int_{a}^{b}\Vert \varphi (t)\Vert _{\mathcal{H}}^{2}dt\right] \Big)^{\frac{1%
}{2}}<+\infty .
\end{equation*}%

Let $(\widetilde{\Omega },\widetilde{\mathcal{F}},\widetilde{P})$ be a copy
of the probability space $(\Omega ,\mathcal{F},P)$. For any random variable
(of arbitrary dimension) $\vartheta $ over $(\Omega ,\mathcal{F},P)$ we
denote by $\widetilde{\vartheta }$ a copy (of the same law as $\vartheta $,
but defined over $(\widetilde{\Omega },\widetilde{\mathcal{F}},\widetilde{P}%
) $: $\widetilde{P}_{\widetilde{\vartheta }}=P_{\vartheta }$. The
expectation $\widetilde{\mathbb{E}}[\cdot ]=\int_{\widetilde{\Omega }}(\cdot
)d\widetilde{P}$ acts only over the variables endowed with a tilde. This can
be made rigorous by working with the product space $(\Omega ,\mathcal{F}%
,P)\otimes (\widetilde{\Omega },\widetilde{\mathcal{F}},\widetilde{P}%
)=(\Omega ,\mathcal{F},P)\otimes (\Omega ,\mathcal{F},P)$ and putting $%
\widetilde{\vartheta }(\widetilde{\omega },\omega ):=\vartheta (\widetilde{%
\omega })$, $(\widetilde{\omega },\omega )\in \widetilde{\Omega }\times
\Omega =\Omega \times \Omega $, for $\vartheta $ random variable defined
over $(\Omega ,\mathcal{F},P))$. This formalism extends naturally to
stochastic processes indexed by time or more general parameter sets. Let $%
L^{2}(\mathcal{F},\mathbb{R})$ be the Hilbert space with inner product $%
(x,y)_{2}=\mathbb{E}[x\cdot y]$, where $x,y\in L^{2}(\mathcal{F},\mathbb{R}%
). $ For any pair of random variable $(\xi ,\eta )\in $ $L^{2}(\mathcal{F},%
\mathbb{R})\times L^{2}(\mathcal{F},\mathbb{R}),$ we let $(\tilde{\xi},%
\tilde{\eta})$ be an independent copy of $(\xi ,\eta )$ defined on $(%
\widetilde{\Omega },\widetilde{\mathcal{F}},\widetilde{P})$. We consider the
product probability space $(\Omega \times \widetilde{\Omega },\mathcal{F}%
\otimes \widetilde{\mathcal{F}},\mathbb{P}\otimes \widetilde{P})$ and
setting $(\tilde{\xi},\tilde{\eta})\left( \omega ,\tilde{\omega}\right)
=(\xi \left( \tilde{\omega}\right) ,\eta \left( \tilde{\omega}\right) )$ for
any $\left( \omega ,\tilde{\omega}\right) \in \Omega \times \widetilde{%
\Omega }$.

Now, let us recall the differentiability with respect to probability
measures studied by Lions \cite{lps} to derive our main result. The main
idea is to identify a distribution $\mu \in \mathbb{P}_{2}(\mathbb{R}^{d})$
with a random variables $\eta \in \mathbb{L}^{2}(\mathcal{F};\mathbb{R}^{d})$
so that $\mu =\mathbb{P}_{\eta }.$ To be more precise, we assume that
probability space $(\Omega ,\mathcal{F},\mathcal{F}_{t},\mathbb{P})$ is rich
enough in the sense that for every $\mu \in \mathcal{P}_{2}(\mathbb{R}^{d}),$
there is a random variable $\eta \in \mathbb{L}^{2}(\mathcal{F};\mathbb{R}%
^{d})$\ such that $\mu =\mathbb{P}_{\eta }$ ($\mu =\mathcal{L}\left( \mu
\right) $)$.$ It is well known that the probability space $\left( \left[ 0,1%
\right] ,\mathcal{B}\left[ 0,1\right] ,\mathrm{d}x\right) ,$ where $\mathrm{d%
}x$ is the Borel measure has this property, see \cite{blj16}.

\begin{definition}[Lift function]
Let $f$ be a given function such that $f:\mathcal{P}_{2}(\mathbb{R}%
^{d})\rightarrow \mathbb{R}.$ We define the lift function $\tilde{f}:\mathbb{%
L}^{2}(\mathcal{F};\mathbb{R}^{d})\rightarrow \mathbb{R}$ such that $\tilde{f%
}(Z):=f(\mathbb{P}_{Z}),Z\in \mathbb{L}^{2}(\mathcal{F};\mathbb{R}^{d}).$
Clearly, the lift function $\tilde{f}$ of $f$, depends only on the law of $%
Z\in $ $\mathbb{L}^{2}(\mathcal{F};\mathbb{R}^{d})$ and is independent of
the choice of the representative $Z$.
\end{definition}

A function $f$ $:\mathcal{P}_{2}(\mathbb{R}^{d})\rightarrow \mathbb{R}$ is
said to be differentiable at $\mu _{0}\in \mathbb{P}_{2}(\mathbb{R}^{d})$ if
there exists $\eta _{0}\in $ $\mathbb{L}^{2}(\mathcal{F};\mathbb{R}^{d})$
with $\mu _{0}=\mathbb{P}_{\eta _{0}}$ such that its lift function $\tilde{f}
$ is Fr\'{e}chet differentiable at $\eta _{0}.$ More precisely, there exists
a continuous linear functional $D\tilde{f}(\eta _{0}):\mathbb{L}^{2}(%
\mathcal{F};\mathbb{R}^{d})\rightarrow \mathbb{R}$ such that%
\begin{eqnarray}
\tilde{f}(\eta _{0}+\xi )-\tilde{f}(\eta _{0}) &=&\left\langle D\tilde{f}%
(\eta _{0}),\xi \right\rangle +O(\left\Vert \xi \right\Vert _{2})
\label{2.1} \\
&=&D_{\xi }f(\mu _{0})+O(\left\Vert \xi \right\Vert _{2}),  \notag
\end{eqnarray}%
where $\left\langle \cdot ,\cdot \right\rangle $ is the dual product on $%
\mathbb{L}^{2}(\mathcal{F};\mathbb{R}^{d}),$ and we will refer to $D_{\xi
}f(\mu _{0})$ as the Fr\'{e}chet derivative of $f$ at $\mu _{0}$ in the
direction $\xi .$ In this case, we have%
\begin{equation*}
D_{\xi }f(\mu _{0})=\left\langle D\tilde{f}(\eta _{0}),\xi \right\rangle =%
\frac{d}{dt}\tilde{f}(\eta _{0}+t\xi )|_{t=0},\text{ with }\mu _{0}=\mathbb{P%
}_{\eta _{0}}.
\end{equation*}%
By applying the Riesz' representation theorem, there is a unique random
variable $z_{0}\in \mathbb{L}^{2}(\mathcal{F};\mathbb{R}^{d})$ such that $%
\left\langle D\tilde{f}(\eta _{0}),\xi \right\rangle =(z_{0},\xi )=\mathbb{E}%
\left[ (z_{0},\xi )\right] ,$ where $\xi \in \mathbb{L}^{2}(\mathcal{F};%
\mathbb{R}^{d}).$ It was shown, (see the works of {\cite{blj16}} and \cite%
{lps}) that there exists a Borel function $h\left[ \mu _{0}\right] \left(
\cdot \right) :\mathbb{R}^{d}\rightarrow \mathbb{R}^{d},$ depending only on
the law $\mu _{0}$ $=\mathbb{P}_{\eta _{0}}$ but not on the particular
choice of the representative $\eta _{0}$ such that $z_{0}=h\left[ \mu _{0}%
\right] (\eta _{0}).$

Thus, we can write (\ref{2.1}) as%
\begin{equation*}
f(\mathbb{P}_{\eta })-f(\mathbb{P}_{\eta _{0}})=(h\left[ \mu _{0}\right]
(\eta _{0}),\eta -\eta _{0})_{2}+O(\left\Vert \eta -\eta _{0}\right\Vert
_{2}),\forall \eta \in \mathbb{L}^{2}(\mathcal{F};\mathbb{R}^{d}).
\end{equation*}%
We denote $\partial _{\mu }f(\mathbb{P}_{\eta _{0}},x)=h\left[ \mu _{0}%
\right] (x),x\in \mathbb{R}^{d}.$ Moreover, we have the following
identities: $D\tilde{f}(\eta _{0})=z_{0}=h\left[ \mu _{0}\right] (\eta
_{0})=\partial _{\mu }f(\mathbb{P}_{\eta _{0}},\eta _{0}),$ and $D_{\xi
}f(\mu _{0})=\left\langle \partial _{\mu }f(\mathbb{P}_{\eta _{0}},\eta
_{0}),\xi \right\rangle ,$ where $\xi =\eta -\eta _{0}.$

\begin{remark}
We point out that for each $\mu \in \mathcal{P}_{2}(\mathbb{R}^{d}),$ $%
\partial _{\mu }f(\mathbb{P}_{\eta _{0}},\cdot )=h\left[ \mathbb{P}_{\eta }%
\right] (\cdot )$ is only defined in a $\mathbb{P}_{\eta }(\mathrm{d}x)$-a.s
sense, where $\mu =\mathbb{P}_{\eta }.$
\end{remark}

\begin{definition}[Space of differentiable functions in $\mathcal{P}_{2}(%
\mathbb{R}^{d})$]
\label{lions} We say that the function $f\in \mathbb{C}_{b}^{1,1}(\mathcal{P}%
_{2}(\mathbb{R}^{d}))$ if for all $\eta \in \mathbb{L}^{2}(\mathcal{F};%
\mathbb{R}^{d}),$ there exists a $\mathbb{P}_{\eta }$-modification of $%
\partial _{\mu }f(\mathbb{P}_{\eta },\cdot )$ such that $\partial _{\mu }f:%
\mathcal{P}_{2}(\mathbb{R}^{d})\times \mathbb{R}^{d}\rightarrow \mathbb{R}%
^{d}$ is bounded and Lipschitz continuous. That is for some $C>0$, it holds
that (i) $\left\vert \partial _{\mu }f(\mu ,x)\right\vert \leqslant C,$ $%
\forall \mu \in \mathcal{P}_{2}(\mathbb{R}^{d}),$ $\forall x\in \mathbb{R}%
^{d},$ and (ii) $\left\vert \partial _{\mu }f(\mu _{1},x_{1})-\partial _{\mu
}f(\mu _{2},x_{2})\right\vert \leqslant C(\mathcal{W}_{2}(\mu _{1},\mu
_{2})+\left\vert x_{1}-x_{2}\right\vert ),$ $\forall \mu _{1},\mu _{2}\in
\mathcal{P}_{2}(\mathbb{R}^{d}),$ $\forall x_{1},x_{2}\in \mathbb{R}^{d}.$
\end{definition}

Note that the version of $\partial _{\mu }f(\mathbb{P}_{\eta },\cdot ),$ $%
\eta \in \mathbb{L}^{2}(\mathcal{F};\mathbb{R}^{d})$ indicated in the above
definition is unique (see Remark 2.2 in \cite{blj16} for more information%
\footnote{%
By leveraging the property that Lipschitz functions that are defined almost
everywhere can be uniquely extended to the entire space, the measure
derivative is extended from the complement of the zero-measure set to the
entire measure space, state space, thereby resolving the issues of
\textquotedblleft incomplete definition and non-uniqueness\textquotedblright
.}).

Let $\Pi =\mathfrak{F}^{0,2}\left( \left[ 0,T\right] \times \mathbb{R\times }%
\mathcal{P}_{2}\left( \mathbb{R}\right) ;\mathbb{R}\right) $ be the vector
space of measurable functions $\varphi :\Omega \times \left[ 0,T\right]
\times \mathbb{R\times }\mathcal{P}_{2}\left( \mathbb{R}\right) \rightarrow
\mathbb{R}$ such that for all $\left( \omega ,t\right) \in \Omega \times %
\left[ 0,T\right] ,$ $x\rightarrow \varphi _{t}\left( x\right) $ is
continuously differentiable; There exists $L^{\varphi }>0$ such that for all
$\left( t,x,\mu \right) \in \left[ 0,T\right] \times \mathbb{R\times }%
\mathcal{P}_{2}\left( \mathbb{R}\right) $ and$,$ $\left\vert \varphi
_{t}\left( 0,\delta _{0}\right) \right\vert \leq L^{\varphi },$ $\left\vert
\left( \partial _{x}\varphi _{t}\right) \left( x,\mu \right) \right\vert
\leq L^{\varphi },$ $\left\vert \left( \partial _{\mu }\varphi _{t}\right)
\left( x,\mu \right) \left( x\right) \right\vert \leq L^{\varphi }$ (for
details of Lions derivative, please refer \cite{Lions2012})$.$ For any $%
\left( x,\mu \right) \in \mathbb{R\times }\mathcal{P}_{2}\left( \mathbb{R}%
\right) ,$ $\varphi \left( t,x,\mu \right) $ is $\mathcal{F}_{t}$-adapted.
The support of $\mu _{A}$ is $\mathbb{R}.$ Let $\mathcal{A}$ be a convex
subset of $\Pi $ representing the set of admissible closed-loop controls.

For each $\phi \in \mathcal{A},$ let $x^{\phi }$ be the associated wealth
process driven by the dynamics (\ref{sde1}):

\begin{equation}
\left\{
\begin{array}{lll}
\mathrm{d}x^{\phi }\left( t\right) & = & \left[ A\left( t\right) x^{\phi
}\left( t\right) +B\left( t\right) u^{\phi }\left( t\right) \right] \mathrm{d%
}t+\sum_{j=1}^{m}D_{j}\left( t\right) u^{\phi }\left( t\right) \mathrm{d}%
W^{j}\left( t\right) , \\
x^{\phi }\left( 0\right) & = & x_{0}>0,\text{ with }u^{\phi }\left( t\right)
=\phi \left( t,x^{\phi }\left( t\right) ,\mathbb{P}_{x^{\phi }\left(
t\right) }\right) ,\text{ }t\in \left[ 0,T\right] ,%
\end{array}%
\right.  \label{sde1}
\end{equation}%
where the initial wealth $x_{0}$ is a random variable with distribution
given by $\mu _{0}\in \mathcal{P}(\mathbb{R})$. Moreover, $A(t)=r\left(
t\right) $ is scalar, $B(t)=\left( b_{1}\left( t\right) -r\left( t\right)
,\ldots ,b_{m}\left( t\right) -r\left( t\right) \right) $ and $%
D_{j}(t)=\left( \sigma _{1j}\left( t\right) ,\ldots ,\sigma _{mj}\left(
t\right) \right) ,$ $(j=1,\cdots ,m)$ are vectors. In addition, we assume
that the matrix $\Pi \left( t\right) =\sum_{j=1}^{m}D_{j}(t)^{\top }D_{j}(t)$
is \textit{non-singular}.

\noindent \textbf{Problem (A). }The investor is to seek an optimal $\phi
^{\ast }\in \mathcal{A}$ that minimizes the quadratic (terminal) cost
function%
\begin{equation}
J(\phi )=\mathbb{E}\left[ \frac{\lambda }{2}\mathbb{D}\left( x^{\phi }\left(
T\right) \right) -\mathbb{E}\left[ x^{\phi }\left( T\right) \right] +%
\mathcal{W}_{2}\left( \mathbb{P}_{x^{\phi }\left( T\right) },\mu _{A}\right)
^{2}\right] ,  \label{cost1}
\end{equation}%
where $\lambda >0$. In the absence of stochastic factors in the price
dynamics---thus within the framework of a complete market model---and when $%
\lambda $ is constant, the above criterion reduces to the classical
mean-variance portfolio selection problem, as analyzed, for instance, by Li
and Zhou (2000) \cite{lz2000}.

As for $\mu _{A}$, consider the same stock market framework described in the
previous section. The market index process is defined as a weighted linear
combination of component stocks:
\begin{equation}
I(t)=\sum_{j=1}^{m}\alpha _{j}S_{j}(t),\quad I(0)=I_{0},
\label{eq:index_def}
\end{equation}
where $\alpha _{j}$ denotes the fixed weight share of the $j$-th component
stock in the market index. Then we set
\begin{equation*}
\mu _{A}=\mathbb{P}_{I(T)}.
\end{equation*}
We adopt the Wasserstein-$2$ metric $\mathcal{W}_{2}$ to quantify the
discrepancy between the terminal wealth distribution $\mathcal{L}(X_{T})$ of
the self-financing portfolio and the target market index distribution $\mathcal{L}(I_{T})$. We elaborate its mathematical rationality and financial
advantages as follows.

\subparagraph{Mathematical rationality of $\mathcal{W}_{2}$ distance.}

First, $\mathcal{W}_{2}$ defines a complete separable metric on the space $\mathcal{P}_{2}(\mathbb{R})$ of probability measures with finite second
moments, satisfying all four metric axioms. The identity condition $W_{2}(\mu ,\nu )=0\iff \mu =\nu $ guarantees that the loss vanishes if and
only if the portfolio perfectly replicates the index's terminal
distribution, which provides a well-posed objective for optimization.

Second, the Wasserstein-$2$ distance serves as the fundamental lower bound
of mean squared tracking error:
\begin{equation*}
\mathcal{W}_{2}^{2}\big(\mathcal{L}(X_{T}),\mathcal{L}(I_{T})\big)\leq
\mathbb{E}|X_{T}-I_{T}|^{2}.
\end{equation*}
Unlike the path-wise mean squared error which relies on a specific joint
coupling of $X_{T}$ and $I_{T}$, $W_{2}$ eliminates the interference of
trajectory correlation and only reflects the intrinsic gap between two
marginal distributions.

Third, convergence under $\mathcal{W}_{2}$ metric simultaneously implies
weak convergence of measures and convergence of second moments. Minimizing $%
\mathcal{W}_{2}$ thus jointly aligns the expectation, volatility, skewness
and tail risk of terminal payoffs, while the classical quadratic loss only
controls the first two moments and fails to capture distributional tail
discrepancies.

Moreover, the Wasserstein-$2$ metric equips $\mathcal{P}_{2}(\mathbb{R})$
with a Riemannian manifold structure, which is compatible with the $\mathbb{C%
}_{b}^{1,1}$ smoothness framework and Lions' measure derivative $\partial
_{\mu }f$ introduced in Definition \ref{lions} Lipschitz regularization,
extension arguments and uniqueness analysis of measure derivatives can be
directly applied under the $\mathcal{W}_{2}$ topology, forming a
self-consistent theoretical system.

\subparagraph{Financial advantages for index tracking problems}

1. \textit{Consistent with the core demand of passive index replication}:
The essential goal of index tracking is to fully replicate the return
distribution of the benchmark index, including both normal fluctuation and
extreme tail events. The $\mathcal{W}_{2}$ metric penalizes all forms of
distributional mismatch, avoiding the defect of mean squared error that
portfolios with identical expectation and variance may carry completely
different crash risks.

2. \textit{Clear economic interpretation}: From the optimal transport
perspective, $\mathcal{W}_{2}^{2}(\mu ,\nu )$ represents the minimal
quadratic cost to transform the portfolio wealth distribution into the index
distribution, which acts as a standardized, interpretable performance metric
for tracking error evaluation.

3. \textit{Stability under model perturbations}: The Wasserstein metric
depends continuously on model parameters such as index weights and stock
diffusion coefficients. Small perturbations of market parameters induce
continuous changes of the objective functional, ensuring numerical stability
during optimal portfolio optimization.

4. \textit{Wide compatibility with stochastic control frameworks}: The $%
\mathcal{W}_{2}$ loss can be extended to infinite-horizon discounted
continuous-time tracking problems, compatible with SDE dynamics, mean-field
approximation and slow-fast scale separation analysis for diffusion
processes.

\subparagraph{Comparison with classical mean squared tracking loss}

The standard quadratic expectation loss only captures the first two moments
of joint payoffs and lacks support for measure-space differential calculus.
In contrast, the Wasserstein-$2$ distance achieves full characterization of
distributional discrepancy, supports rigorous smoothness analysis on measure
spaces, and better matches the practical risk control requirements of
passive index funds.
%where $\lambda $ is a positive $\mathcal{F}_{T}$-measurable random variable.
%In the absence of random factors in the dynamics of the price process, hence
%in a complete market model, and when $\lambda $ is constant, the above
%criterion reduces to the classical mean-variance portfolio selection, as
%studied e.g. in (Li and Zhou 2000).

Given $\mu _{A}\in \mathcal{P}_{2}\left( \mathbb{R}\right) $ for instance
above$,$ define $g_{A}:\mathcal{P}_{2}\left( \mathbb{R}\right) \rightarrow
\mathbb{R}$ by%
\begin{equation*}
g_{A}\left( \mu \right) =\frac{\lambda }{2}\mathbb{V}\text{ar}\left( \mu
\right) -\bar{\mu}+\mathcal{W}_{2}\left( \mu ,\mu _{A}\right) ^{2},
\end{equation*}%
where
\begin{equation*}
\bar{\mu}=\int_{\mathbb{R}}x\mu \left( \mathrm{d}x\right) ,\text{ }\bar{\mu}%
_{2}=\int_{\mathbb{R}}x^{2}\mu \left( \mathrm{d}x\right) ,\text{ }\mathbb{V}%
\text{ar}\left( \mu \right) =\bar{\mu}_{2}-\bar{\mu}^{2},\text{ }\mu \in
\mathcal{P}_{2}\left( \mathbb{R}\right) .
\end{equation*}%
Immediately, the cost functional (\ref{cost1}) can be expressed as $J(\phi )=%
\mathbb{E}\left[ g_{A}\left( \mathbb{P}_{x\left( T\right) }\right) \right] $.

We now briefly introduce the well-known Kantorovich duality theorem in
optimal transport theory.

\begin{theorem}
\label{the1}Let $(E,\rho )$ be a Polish space, $\mathscr{E}$ be the $\sigma $%
-algebra on $E$, and let $\mathcal{P}_{p}(E)$ denote the space of Borel
probability measures on $E$ with finite $p$-th moment where $p\geq 1$. The $%
p $-Wasserstein distance is defined by
\begin{equation}
\mathcal{W}_{p}^{\rho }(\mu ,\nu )\triangleq \inf_{\pi \in \mathscr{L}(%
%TCIMACRO{\U{3bc} }%
%BeginExpansion
\mu
%EndExpansion
,\nu )}\left\{ \int_{E\times E}\rho ^{p}(x,y)\mathrm{d}\pi (x,y)\right\}
^{1/p},\text{ }\forall \mu ,\nu \in \mathcal{P}_{p}(E)  \label{w1},
\end{equation}%
where
\begin{equation*}
\mathscr{C}(\mu ,\nu )=\left\{ \pi \in \mathcal{P}(E\times E)\mid \forall
A\in \mathscr{E},\ \pi (A\times E)=\mu (A),\ \pi (E\times A)=\nu (A)\right\}
.
\end{equation*}%
Then there exists a dual representation of the $p$-Wasserstein distance via
the following identity:
\begin{equation}
\mathcal{W}_{p}^{\rho }(\mu ,\nu )^{p}=\sup_{(f,g)\in \mathscr{F}_{\mathrm{%
Lip}}}\left\{ \int_{E}fd\mu +\int_{E}gd\nu \right\} ,  \label{w2}
\end{equation}%
where
\begin{equation*}
\mathscr{F}_{\mathrm{Lip}}\triangleq \left\{ \left( f,g\right) \mid f,g\text{
are Lipschitz continuous satisfying }f\left( x\right) +g\left( y\right) \leq
\rho ^{p}\left( x,y\right) \right\} .
\end{equation*}
\end{theorem}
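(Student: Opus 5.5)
The plan is to prove the two inequalities in (\ref{w2}) separately. The inequality ``$\geq$'' (weak duality) is immediate. For any $(f,g)\in\mathscr{F}_{\mathrm{Lip}}$ and any coupling $\pi\in\mathscr{C}(\mu,\nu)$, integrate the pointwise constraint $f(x)+g(y)\le\rho^p(x,y)$ against $\pi$. Since $\pi$ has marginals $\mu$ and $\nu$, this gives
\[
\int_E f\,\mathrm{d}\mu+\int_E g\,\mathrm{d}\nu=\int_{E\times E}\bigl(f(x)+g(y)\bigr)\,\mathrm{d}\pi(x,y)\le\int_{E\times E}\rho^p\,\mathrm{d}\pi .
\]
Integrability holds because Lipschitz functions grow at most linearly in $\rho(x_0,\cdot)$ and $\mu,\nu\in\mathcal{P}_p(E)$ with $p\ge1$. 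Taking the supremum over $(f,g)$ and the infimum over $\pi$ yields the claim.

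For ``$\leq$'' I would first treat a \emph{bounded Lipschitz} cost and then remove the truncation. Fix $n\in\mathbb{N}$ and set $c_n(x,y):=\min\{\rho(x,y),n\}^p$. This cost is bounded by $n^p$, and it is Lipschitz in each variable with constant $L_n=p\,n^{p-1}$, because $t\mapsto\min\{t,n\}^p$ is Lipschitz with that constant and $\rho$ is $1$-Lipschitz in each argument. For this cost I would invoke the classical Kantorovich duality for bounded continuous costs on Polish spaces (proved, e.g., through the Fenchel--Rockafellar theorem, or through $c$-cyclical monotonicity of the support of an optimal plan). Optimal plans exist by tightness of $\mathscr{C}(\mu,\nu)$ and lower semicontinuity of $\pi\mapsto\int c_n\,\mathrm{d}\pi$. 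This gives
\[
\min_{\pi}\int c_n\,\mathrm{d}\pi=\sup\Bigl\{\int f\,\mathrm{d}\mu+\int g\,\mathrm{d}\nu:\ f(x)+g(y)\le c_n(x,y)\Bigr\},
\]
with the supremum over bounded measurable functions.

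Next I would upgrade this supremum to Lipschitz potentials by the double $c_n$-transform. Given an admissible pair $(f,g)$, replace $g$ by $f^{c}(y)=\inf_x\{c_n(x,y)-f(x)\}\ge g$, and then replace $f$ by $f^{cc}(x)=\inf_y\{c_n(x,y)-f^c(y)\}\ge f$. Each transform is an infimum of $L_n$-Lipschitz functions. Boundedness of $c_n$ ensures that, after normalizing $\sup f=0$, these infima are finite and bounded, so $f^{cc}$ and $f^c$ are $L_n$-Lipschitz. The pair $(f^{cc},f^{c})$ still satisfies $f^{cc}(x)+f^c(y)\le c_n(x,y)$ and does not decrease the dual value. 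Since $c_n\le\rho^p$, this pair lies in $\mathscr{F}_{\mathrm{Lip}}$, and therefore
\[
\sup_{\mathscr{F}_{\mathrm{Lip}}}\Bigl\{\int f\,\mathrm{d}\mu+\int g\,\mathrm{d}\nu\Bigr\}\ \ge\ \min_{\pi}\int c_n\,\mathrm{d}\pi\qquad\text{for every } n.
\]

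The final and hardest step is to show that $\min_\pi\int c_n\,\mathrm{d}\pi\uparrow\mathcal{W}_p^\rho(\mu,\nu)^p$ as $n\to\infty$. Let $\pi_n$ be optimal for $c_n$. Because $\mathscr{C}(\mu,\nu)$ is tight and weakly closed, a subsequence $\pi_n$ converges weakly to some $\pi^*\in\mathscr{C}(\mu,\nu)$. For fixed $k\le n$, the monotonicity $c_k\le c_n$ gives $\int c_k\,\mathrm{d}\pi_n\le\int c_n\,\mathrm{d}\pi_n$. Letting $n\to\infty$ and using that $c_k$ is bounded and continuous, we get $\int c_k\,\mathrm{d}\pi^*\le\liminf_n\min\int c_n$. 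Letting $k\to\infty$ and applying monotone convergence then yields $\int\rho^p\,\mathrm{d}\pi^*\le\lim_n\min\int c_n$, which closes the chain of inequalities. The delicate points are the existence of optimal plans in the bounded case and the finiteness of the $c$-transforms; both rely on boundedness of $c_n$, which is exactly why the truncation is introduced.
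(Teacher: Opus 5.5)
Your argument is correct. It differs from the paper mainly in being a proof at all: the paper proves nothing here and leans on the literature. In particular it cites Rachev's version, where the supremum runs over $\mathscr{F}_{\mathrm{Lip}}^{b}$ (bounded Lipschitz pairs), and remarks that boundedness can be dropped.

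Granting that citation, the statement takes two lines. The inclusion $\mathscr{F}_{\mathrm{Lip}}^{b}\subset\mathscr{F}_{\mathrm{Lip}}$ gives $\mathcal{W}_p^\rho(\mu,\nu)^p\le\sup_{\mathscr{F}_{\mathrm{Lip}}}\{\int f\,\mathrm{d}\mu+\int g\,\mathrm{d}\nu\}$. The reverse inequality is exactly your weak-duality step. The only new point there is that unbounded Lipschitz $f,g$ are integrable, because they grow at most linearly and $\mu,\nu\in\mathcal{P}_p(E)$ with $p\ge 1$.

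You instead rederive Rachev's refinement from the generic duality for bounded continuous costs. You truncate to $c_n=\min\{\rho,n\}^p$, take the double $c_n$-transform to get $pn^{p-1}$-Lipschitz potentials admissible for $\rho^p$, and let $n\to\infty$ using Prokhorov and monotone convergence. Each of these steps checks out.

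What your route buys:
\begin{itemize}
\item It is self-contained: it needs only the most standard form of Kantorovich duality, as in Villani or Carmona--Delarue.
\item It shows that the Lipschitz constants of the approximating potentials may blow up. This fits the fact that for $p>1$ the supremum is generally not attained in $\mathscr{F}_{\mathrm{Lip}}$. That matters here, since the paper later works with optimal potentials $\varphi^{\ast},\psi^{\ast}$.
\end{itemize}

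You can shorten your argument. Apply the bounded-continuous duality directly to the continuous nonnegative cost $\rho^p$. Any admissible pair with $\|f\|_\infty,\|g\|_\infty\le M$ automatically satisfies $f(x)+g(y)\le\min\{\rho(x,y)^p,2M\}$. A single double transform for that truncated cost then gives a Lipschitz pair in $\mathscr{F}_{\mathrm{Lip}}$ with no smaller dual value, so the Prokhorov limit is no longer needed.
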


%Theorem 1 (Rachev \cite{r85}, page 649) states above results when the
%supremum in (\ref{w2}) are taken over $\mathscr{F}_{\mathrm{Lip}}^{b}$
%denoting the space of Lipschitz continuous and bounded functions. In
%Proposition 5.3 (Carmona and Delarue \cite{cdi18}, page 354), the supremum
%is taken over all the real valued bounded continuous functions. While in
%Theorem 5.10 (Villani \cite{v09}, page 57), the supremum is taken over all
%the real valued $L^{1}$-integrable functions. Due to the real application,
%we remove the condition \emph{boundedness} in Theorem \ref{the1}. The proof can be found in
%Appendix \ref{app}.
In the dual formulation of the Monge-Kantorovich problem (MKP), each symbol
carries a well-defined physical or economic interpretation, which is
essential for understanding the underlying constraints. Specifically:

\begin{itemize}
\item $x$: an arbitrary point in the source space, representing a location
of initial mass distribution (e.g., the origin of goods or material);

\item $y$: an arbitrary point in the target space, representing a
destination in the final mass distribution (e.g., the delivery site of
goods);

\item $\rho (x,y)$: the cost function in the primal transport problem,
denoting the actual cost of transporting one unit of mass from $x$ to $y$,
which may correspond to distance, monetary expense, or time;

\item $f(x)$: the value function associated with the source point $x$,
interpreted as the \textquotedblleft implicit value\textquotedblright\ of
acquiring one unit of mass at $x$, or equivalently, the minimum
\textquotedblleft compensation value\textquotedblright\ required to extract one unit of mass from $x$;

\item $g(y)$: the value function associated with the target point $y$,
interpreted as the \textquotedblleft implicit value\textquotedblright\ of
receiving one unit of mass at $y$, or equivalently, the maximum \textquotedblleft payment
value\textquotedblright\ one would be willing to offer to deliver mass to $y$.

\item The constraint $f(x)+g(y)\leq \rho (x,y)$ can be literally interpreted
as follows: the sum of the value of acquiring mass at $x$ and the value of
delivering mass at $y$ must not exceed the actual transportation cost from $%
x $ to $y$. The core intuition behind this constraint is to prevent
``arbitrage opportunities" and ensure the economic and physical consistency
of the dual solution. This condition eliminates non-physical or economically
infeasible value assignments by enforcing feasibility through a no-arbitrage
principle. To illustrate, suppose there exists a pair $(x_{0},y_{0})$ such
that $f(x_{0})+g(y_{0})>\rho (x_{0},y_{0})$. This would imply that the
combined value of extracting mass at $x_{0}$ and delivering it to $y_{0}$
exceeds the cost of transportation---enabling a risk-free profit. In such a
scenario, one could generate unbounded gains by repeatedly transporting mass
from $x_{0}$ to $y_{0}$, which contradicts the fundamental assumption that
transportation incurs a real, irreducible cost. Hence, the constraint
ensures that the dual variables reflect realistic economic valuations and
that the optimal solution remains consistent with the physical constraints
of the transport process.
\end{itemize}

Theorem 1 (Rachev \cite{r85}, p. 649) presents the aforementioned results
when the supremum in (\ref{w2}) is taken over $\mathscr{F}_{\mathrm{Lip}%
}^{b} $, the space of Lipschitz continuous and bounded real-valued
functions. In Proposition 5.3 (Carmona and Delarue \cite{cdi18}, p. 354),
the supremum is taken over the set of all bounded continuous real-valued
functions. In contrast, Theorem 5.10 (Villani \cite{v09}, p. 57) considers
the supremum over all real-valued $L^{1}$-integrable functions. 

%For
%practical applications, we relax the boundedness condition imposed in
%Theorem \ref{the1}. The corresponding proof is provided in Appendix \ref{app}%
%.

Rademacher's theorem (see \cite{eg15}, Theorem 2, page 81) implies that $f$
and $g$ possess the derivatives almost everywhere on $\mathbb{R}$.
Accordingly, $\partial _{x}f^{\ast }$ exists almost everywhere. It should be
noted that, under the assumption, the support of $\mu _{A}$ is assumed to be
$\mathbb{R}$, the uniqueness of the Kantorovich potentials up to additive
constants can be established by applying Proposition 7.18 in \cite{san15}
with minor modifications. This uniqueness property is subsequently utilized
in the following proofs.%
%Note that, as we assume the support of $\mu _{A}$ to be $\mathbb{R}$ in
%Assumption ?, we can establish the uniqueness of the Kantorovich potentials
%up to additive constants by applying (\cite{san15}, Proposition 7.18) with
%minor modifications and that the uniqueness is used in the following proofs.

This result reformulates the primal optimal transport problem---originally
defined as an infimum over couplings---into a dual maximization problem over
suitable function pairs. Such a duality is fundamental in optimal transport
theory, providing a powerful analytical tool for studying the properties of
Wasserstein distances and enabling practical numerical approximations.

%The plan of this paper is orgonized as follows: After notations in Section \ref{sect2}, we derive the optimal feedback strategy for
%problem (A) by Pontryagin's maximum principle
%and Kantorovich duality theorem. Due to the variance appearing in cost
%functional (\ref{cost1}), the feedback control depends on the state and
%probability measure. When Kantorovich potential turns to be linear function,
%is is possible to seek the the corresponding Riccati equations.

\section{Main results}

\label{sect3}

%In this section, we derive the optimal feedback strategy for
%problem (A). We adopt the Pontryagin's maximum principle by means of BSDE
%and Kantorovich duality theorem. Due to the variance appearing in cost
%functional (\ref{cost1}), the feedback control depends on the state and
%probability measure. When Kantorovich potential turns to be linear function,
%is is possible to seek the the corresponding Riccati equations.

In this section, we derive the optimal feedback control strategy for Problem
(A). Our approach combines the stochastic maximum principle, formulated via
BSDEs with the Kantorovich duality theorem from optimal transport theory.
Owing to the presence of a variance term in the cost functional (\ref{cost1}%
), the resulting optimal feedback law depends jointly on the current state
and the law (i.e., probability distribution) of the state process. In the
special case where the Kantorovich potential is affine-i.e., linear up to an
additive constant, the associated optimality conditions reduce to a system
of coupled Riccati-type differential equations, which admit a well-posed
solution under standard regularity and convexity assumptions.

We begin by considering the first-order variational equation associated with
the underlying system.

\begin{lemma}
\label{l1} For $\phi ^{\ast }\in \mathcal{A}$, let $x^{\phi ^{\ast }}$ be
the wealth process satisfying (\ref{sde1}). For each $\phi ^{\prime }\in
\mathcal{A}$, let $y^{\phi ^{\ast },\phi ^{\prime }}$ be the solution to the
following dynamics: for all $t\in \lbrack 0;T]$,%
\begin{equation}
\left\{
\begin{array}{rcl}
\mathrm{d}y^{\phi ^{\ast },\phi ^{\prime }}\left( t\right) & = & \Big \{%
\left( A\left( t\right) +B\left( t\right) \partial _{x}\phi ^{\ast }\left(
t,x^{\phi ^{\ast }}\left( t\right) ,\mathbb{P}_{x^{\phi ^{\ast }}\left(
t\right) }\right) \right) y^{\phi ^{\ast },\phi ^{\prime }}\left( t\right)
\\
&  & +\mathbb{\hat{E}}\left[ B\left( t\right) \partial _{\mu }\phi ^{\ast
}\left( t,x^{\phi ^{\ast }}\left( t\right) ,\mathbb{P}_{x^{\phi ^{\ast
}}\left( t\right) };\hat{x}^{\phi ^{\ast }}\left( t\right) \right) \hat{y}%
^{\phi ^{\ast },\phi ^{\prime }}\left( t\right) \right] \\
&  & +B\left( t\right) \phi ^{\prime }\left( t,x^{\phi ^{\ast }}\left(
t\right) ,\mathbb{P}_{x^{\phi ^{\ast }}\left( t\right) }\right) \Big \}%
\mathrm{d}t \\
&  & +\Big \{\left( \sum_{j=1}^{m}D_{j}\left( t\right) \partial _{x}\phi
^{\ast }\left( t,x^{\phi ^{\ast }}\left( t\right) ,\mathbb{P}_{x^{\phi
^{\ast }}\left( t\right) }\right) \right) y^{\phi ^{\ast },\phi ^{\prime
}}\left( t\right) \\
&  & +\mathbb{\hat{E}}\left[ \sum_{j=1}^{m}D_{j}\left( t\right) \partial
_{\mu }\phi ^{\ast }\left( t,x^{\phi ^{\ast }}\left( t\right) ,\mathbb{P}%
_{x^{\phi ^{\ast }}\left( t\right) };\hat{x}^{\phi ^{\ast }}\left( t\right)
\right) \hat{y}^{\phi ^{\ast },\phi ^{\prime }}\left( t\right) \right] \\
&  & +\sum_{j=1}^{m}D_{j}\left( t\right) \phi ^{\prime }\left( t,x^{\phi
^{\ast }}\left( t\right) ,\mathbb{P}_{x^{\phi ^{\ast }}\left( t\right)
}\right) \Big \}\mathrm{d}W^{j}\left( t\right) , \\
\hat{y}^{\phi ^{\ast },\phi ^{\prime }}\left( 0\right) & = & 0,\text{ }t\in %
\left[ 0,T\right] .%
\end{array}%
\right.  \label{v1}
\end{equation}%
Then the following estimation holds:%
\begin{equation*}
\lim_{\varepsilon \rightarrow 0}\mathbb{E}\left[ \sup_{t\in \left[ 0,T\right]
}\left\vert \frac{1}{\varepsilon }\left( x^{\phi ^{\varepsilon }}\left(
t\right) -x^{\phi ^{\ast }}\left( t\right) \right) -y^{\phi ^{\ast },\phi
^{\prime }}\left( t\right) \right\vert ^{2}\right] =0,
\end{equation*}%
where $\phi ^{\varepsilon }=\phi ^{\ast }+\varepsilon \phi ^{\prime }$ for
all $\varepsilon \in (0,1)$.
\end{lemma}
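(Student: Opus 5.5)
The plan is the standard first-order variational argument for McKean--Vlasov dynamics, adapted to closed-loop controls. First, well-posedness of (\ref{v1}). It is a linear mean-field SDE in $y$ whose coefficients $\partial_x\phi^*$ and $\partial_\mu\phi^*$ are bounded by $L^{\phi^*}$ (definition of $\Pi$). Its inhomogeneous terms $B\phi'$ and $D_j\phi'$ evaluated along $x^{\phi^*}$ are square integrable, because $|\phi'(t,x,\mu)|\le L^{\phi'}(1+|x|+\mathcal W_2(\mu,\delta_0))$ and $\mathbb E\sup_t|x^{\phi^*}(t)|^2<\infty$. A Picard iteration in $L^2_{\mathcal F}$, using $|\hat{\mathbb E}[\partial_\mu\phi^*\,\hat y]|\le L^{\phi^*}(\mathbb E|y|^2)^{1/2}$, therefore gives a unique solution with $\mathbb E\sup_t|y|^2<\infty$. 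The same linear-growth bounds and the convexity of $\mathcal A$ give $\sup_{\varepsilon\in(0,1)}\mathbb E\sup_t|x^{\phi^\varepsilon}(t)|^2<\infty$. Burkholder--Davis--Gundy and Gronwall then yield the a priori estimate
\[
\mathbb E\sup_{t}|x^{\phi^\varepsilon}(t)-x^{\phi^*}(t)|^2\le C\varepsilon^2 .
\]
This estimate uses Lipschitz continuity of $\phi^*$ in $(x,\mu)$ w.r.t. $(|\cdot|,\mathcal W_2)$, which follows from the derivative bounds, together with $\mathcal W_2(\mathbb P_{x^{\phi^\varepsilon}},\mathbb P_{x^{\phi^*}})\le(\mathbb E|x^{\phi^\varepsilon}-x^{\phi^*}|^2)^{1/2}$.

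Second, define $\eta^\varepsilon(t)=\varepsilon^{-1}(x^{\phi^\varepsilon}(t)-x^{\phi^*}(t))-y^{\phi^*,\phi'}(t)$, and set $x^\lambda=x^{\phi^*}+\lambda\varepsilon(y+\eta^\varepsilon)$, which interpolates between $x^{\phi^*}$ and $x^{\phi^\varepsilon}$. Write
\[
\phi^\varepsilon(t,x^{\phi^\varepsilon},\mathbb P_{x^{\phi^\varepsilon}})-\phi^*(t,x^{\phi^*},\mathbb P_{x^{\phi^*}})
=\big[\phi^*(t,x^{\phi^\varepsilon},\mathbb P_{x^{\phi^\varepsilon}})-\phi^*(t,x^{\phi^*},\mathbb P_{x^{\phi^*}})\big]+\varepsilon\,\phi'(t,x^{\phi^\varepsilon},\mathbb P_{x^{\phi^\varepsilon}}).
\]
The first bracket is expanded by the mean-value formula along $x^\lambda$, using the lifted Fréchet derivative and the Lions derivative representation $D\tilde f=\partial_\mu f(\mathbb P_\eta,\eta)$. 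This gives
\[
\varepsilon\int_0^1\Big(\partial_x\phi^*(\cdot,x^\lambda,\mathbb P_{x^\lambda})(y+\eta^\varepsilon)+\hat{\mathbb E}\big[\partial_\mu\phi^*(\cdot,x^\lambda,\mathbb P_{x^\lambda};\hat x^\lambda)(\hat y+\hat\eta^\varepsilon)\big]\Big)\mathrm d\lambda .
\]
Dividing by $\varepsilon$ and subtracting (\ref{v1}), $\eta^\varepsilon$ satisfies a linear mean-field SDE with $\eta^\varepsilon(0)=0$. Its coefficients are bounded and multiply $\eta^\varepsilon$ and $\hat{\mathbb E}[\cdot\,\hat\eta^\varepsilon]$, and it carries remainder terms
\[
R^\varepsilon(t)=\int_0^1\big(\partial_x\phi^*(x^\lambda,\mathbb P_{x^\lambda})-\partial_x\phi^*(x^{\phi^*},\mathbb P_{x^{\phi^*}})\big)\mathrm d\lambda\,y+\hat{\mathbb E}[\cdots]\,\hat y+\big(\phi'(x^{\phi^\varepsilon},\mathbb P_{x^{\phi^\varepsilon}})-\phi'(x^{\phi^*},\mathbb P_{x^{\phi^*}})\big),
\]
multiplied by $B$ and $D_j$. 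Applying Itô/BDG to $\sup_t|\eta^\varepsilon|^2$ and Gronwall, with $\mathbb E|\hat{\mathbb E}[\xi\hat\eta]|^2\le C\,\mathbb E|\eta|^2$, gives
\[
\mathbb E\sup_t|\eta^\varepsilon(t)|^2\le C\,\mathbb E\int_0^T|R^\varepsilon(t)|^2\,\mathrm dt .
\]

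Third, and this is the main obstacle, show that $\mathbb E\int_0^T|R^\varepsilon|^2\,\mathrm dt\to0$. The $\phi'$ part is $O(\varepsilon)$ by Lipschitz continuity and the a priori estimate. For the derivative parts, only continuity of $\partial_x\phi^*$ is assumed in the definition of $\Pi$, not Lipschitz continuity, so I would argue by dominated convergence. Along a subsequence $x^\lambda\to x^{\phi^*}$ in $L^2$ and $dt\otimes d\mathbb P$-a.e., and $\mathbb P_{x^\lambda}\to\mathbb P_{x^{\phi^*}}$ in $\mathcal W_2$, uniformly in $\lambda$. The integrands are bounded by $2L^{\phi^*}|y|$, which is in $L^2$. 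Pointwise convergence then needs joint continuity of $\partial_x\phi^*$ and $\partial_\mu\phi^*$ in $(x,\mu,\hat x)$. I would impose this as the intended reading of the regularity in $\Pi$, in the $\mathbb C_b^{1,1}$ sense of Definition \ref{lions}. Under the Lipschitz version the term is simply $O(\varepsilon)\,(\mathbb E\sup|y|^4)^{1/2}$-type, or handled via boundedness plus uniform integrability. The mean-field term $\hat{\mathbb E}[(\partial_\mu\phi^*(x^\lambda,\mathbb P_{x^\lambda};\hat x^\lambda)-\partial_\mu\phi^*(x^{\phi^*},\mathbb P_{x^{\phi^*}};\hat x^{\phi^*}))\hat y]$ is treated identically on the product space. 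The subsequence argument extends to the whole family because every subsequence has a further subsequence with vanishing limit. This yields $\lim_{\varepsilon\to0}\mathbb E\sup_t|\eta^\varepsilon(t)|^2=0$, which is the claim.
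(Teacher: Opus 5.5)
The paper does not prove Lemma \ref{l1}. It is stated and then used directly to obtain (\ref{var1}) and Theorem \ref{the2}, and the hypotheses invoked around it (``Assumptions 4.1 and 4.2'', ``(H1)'') are never formulated, so there is no argument in the paper to compare yours with. Your proposal is the standard first-order variational argument for mean-field dynamics, as in Buckdahn et al.\ \cite{blj16}, here with a closed-loop control evaluated along the state. It is correct. The following steps all go through as you describe them:
the a priori bound $\mathbb{E}\sup_t|x^{\phi^\varepsilon}(t)-x^{\phi^*}(t)|^2\le C\varepsilon^2$;
the interpolation $x^\lambda=x^{\phi^*}+\lambda(x^{\phi^\varepsilon}-x^{\phi^*})$;
the linear mean-field equation for $\eta^\varepsilon$, whose coefficients are bounded uniformly in $\varepsilon$;
the Gronwall reduction to $\mathbb{E}\int_0^T|R^\varepsilon|^2\,\mathrm{d}t$;
the dominated-convergence argument on $\Omega\times\hat\Omega$ with the sub-subsequence trick.

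What your write-up adds is an explicit account of what the lemma needs. The class $\Pi$ only gives $C^1$-dependence on $x$ and bounds on $\partial_x\phi^*$ and $\partial_\mu\phi^*$. That yields Lipschitz continuity in $(x,\mu)$, but it gives neither the chain rule along $\lambda\mapsto(x^\lambda,\mathbb{P}_{x^\lambda})$ used in your second step nor the pointwise convergence of the remainder. Some joint continuity is genuinely needed, so it should be stated as a hypothesis of the lemma rather than presented as the intended reading of $\Pi$.

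In fact slightly less than you impose suffices. Write $\mu^\varepsilon_t=\mathbb{P}_{x^{\phi^\varepsilon}(t)}$ and $\mu^*_t=\mathbb{P}_{x^{\phi^*}(t)}$, and split the increment as $[\phi^*(t,x^{\phi^\varepsilon},\mu^\varepsilon_t)-\phi^*(t,x^{\phi^*},\mu^\varepsilon_t)]+[\phi^*(t,x^{\phi^*},\mu^\varepsilon_t)-\phi^*(t,x^{\phi^*},\mu^*_t)]$. The second bracket only uses Fr\'echet differentiability of the lift at the fixed base point. After division by $\varepsilon$, its remainder tends to $0$ for each $(t,\omega)$ and is uniformly bounded by the Lipschitz bound and the a priori estimate. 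Hence only $\partial_x\phi^*$ needs to be jointly continuous in $(x,\mu)$.

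Three minor corrections:
\begin{enumerate}
\item $\phi^*+\varepsilon\phi'$ is not a convex combination, so convexity of $\mathcal{A}$ plays no role. What you actually use is that $\Pi$ is a vector space and that $\phi^\varepsilon$ is Lipschitz with constant at most $L^{\phi^*}+L^{\phi'}$, uniformly in $\varepsilon$.
\item You need $\mathbb{E}|x_0|^2<\infty$, which the paper does not impose (it only takes $\mu_0\in\mathcal{P}(\mathbb{R})$).
\item The ``$O(\varepsilon)$ under Lipschitz derivatives'' shortcut requires fourth moments of $y$ and of $x^{\phi^\varepsilon}-x^{\phi^*}$, hence of $x_0$. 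The boundedness-plus-dominated-convergence route is therefore the more economical one.
\end{enumerate}
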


Lemma \ref{l1} implies that $y^{\phi ^{\ast },\phi ^{\prime }}$ is the
derivative of the wealth process $x^{\phi ^{\ast }}$ when the investor
varies her policy in the direction $\phi ^{\prime }$.

The proof is obtained by integrating Pontryagin's maximum principle for
stochastic control problems and the Kantorovich duality theorem. Although
the idea is based on the proof of stochastic Pontryagin's principle, the
proof requires handling the Wasserstein distance to obtain the terminal
condition (10). This study handles this by utilizing the Kantorovich duality
theorem.

We first introduce notations before providing the proof. The optimal
closed-loop strategy $\phi ^{\ast }=\left\{ \phi ^{\ast }\left( t,x^{\phi
^{\ast }}\left( t\right) ,\mathbb{P}_{x^{\phi ^{\ast }}\left( t\right)
}\right) \right\} _{0\leq t\leq T}$ determines the distribution $\mu
_{T}^{\ast }$ of the terminal state $x^{\phi ^{\ast }}\left( T\right) $ and
the squared Wasserstein distance $\mathcal{W}_{2}^{\rho }(\mu ,\nu )^{2}$ in
the cost functional (6). According to Theorem 1, there exist Kantorovich
potentials $\varphi ^{\ast }$ and $\psi ^{\ast }$ such that the following
holds:

\begin{equation}
\mathcal{W}_{2}^{\rho }(\mu _{T}^{\ast },\mu _{A})^{2}=\int_{\mathbb{R}%
}\varphi ^{\ast }\mathrm{d}\mu _{T}^{\ast }-\int_{\mathbb{R}}\psi ^{\ast }%
\mathrm{d}\mu _{A}.  \label{cost2}
\end{equation}%
Immediately, the cost functional (\ref{cost1}) can be rewritten as
\begin{equation}
J(\phi ^{\ast } )=\mathbb{E}\left[ g_{A}\left( \mathbb{P}_{x^{\phi ^{\ast }}\left(
T\right) }\right) \right] =\mathbb{E}\left[ \frac{\lambda }{2}\mathbb{V}%
\text{ar}\left( \mu _{T}^{\ast }\right) -\bar{\mu}_{T}^{\ast }+\int_{\mathbb{%
R}}\varphi ^{\ast }\mathrm{d}\mu _{T}^{\ast }-\int_{\mathbb{R}}\psi ^{\ast }%
\mathrm{d}\mu _{A}\right] .  \label{cost3}
\end{equation}%
First, we show that the G\^{a}teaux derivative of the cost function (\ref%
{cost1}) becomes (letting $\phi ^{\prime \prime }=\phi ^{\prime }-\phi
^{\ast }$)%
\begin{eqnarray}
0 &\leq &\lim_{\varepsilon \rightarrow 0}\frac{J(\phi ^{\ast }+\varepsilon
\left( \phi ^{\prime \prime }\right) )-J\left( \phi ^{\ast }\right) }{%
\varepsilon }  \notag \\
&=&\lim_{\varepsilon \rightarrow 0}\frac{\mathbb{E}\left[ g_{A}\left(
\mathbb{P}_{x^{\phi ^{\prime \prime }}\left( T\right) }\right) -g_{A}\left(
\mathbb{P}_{x^{\phi ^{\ast }}\left( T\right) }\right) \right] }{\varepsilon }
\notag \\
&=&\mathbb{E}\left\{ \mathbb{\hat{E}}\left[ \partial _{\mu }g_{A}\left(
\mathbb{P}_{x^{\phi ^{\ast }}\left( T\right) };\hat{x}^{\phi ^{\ast }}\left(
T\right) \right) \hat{y}^{\phi ^{\ast },\phi ^{\prime \prime }}\left(
T\right) \right] \right\}  \notag \\
&=&\mathbb{E}\left\{ \mathbb{\hat{E}}\left[ \left( \lambda \left( \hat{x}%
^{\phi ^{\ast }}\left( T\right) -\mathbb{E}\left[ \hat{x}^{\phi ^{\ast
}}\left( T\right) \right] \right) -1+\partial _{x}\varphi ^{\ast }\left(
\hat{x}^{\phi ^{\ast }}\left( T\right) \right) \right) \hat{y}^{\phi ^{\ast
},\phi ^{\prime \prime }}\left( T\right) \right] \right\} .  \label{var1}
\end{eqnarray}%
We introduce the adjoint equations involved in the stochastic maximum
principle for our control problem%
\begin{equation}
\left\{
\begin{array}{rcl}
-\mathrm{d}p\left( t\right) & = & \Big \{\left( A\left( t\right) +B\left(
t\right) \partial _{x}\phi ^{\ast }\left( t,x^{\phi ^{\ast }}\left( t\right)
,\mathbb{P}_{x^{\phi ^{\ast }}\left( t\right) }\right) \right) p\left(
t\right) \\
&  & +\mathbb{\hat{E}}\left[ B\left( t\right) \partial _{\mu }\phi ^{\ast
}\left( t,\hat{x}^{\phi ^{\ast }}\left( t\right) ,\mathbb{P}_{x^{\phi ^{\ast
}}\left( t\right) };x^{\phi ^{\ast }}\left( t\right) \right) \hat{p}\left(
t\right) \right] \\
&  & +\sum_{j=1}^{m}D_{j}\left( t\right) \partial _{x}\phi ^{\ast }\left(
t,x^{\phi ^{\ast }}\left( t\right) ,\mathbb{P}_{x^{\phi ^{\ast }}\left(
t\right) }\right) q_{j}\left( t\right) \\
&  & +\mathbb{\hat{E}}\left[ \sum_{j=1}^{m}D_{j}\left( t\right) \partial
_{\mu }\phi ^{\ast }\left( t,\hat{x}^{\phi ^{\ast }}\left( t\right) ,\mathbb{%
P}_{x^{\phi ^{\ast }}\left( t\right) };x^{\phi ^{\ast }}\left( t\right)
\right) \hat{q}_{j}\left( t\right) \right] \Big \}\mathrm{d}t \\
&  & -\sum_{j=1}^{m}q_{j}\left( t\right) \mathrm{d}W^{j}\left( t\right) , \\
p\left( T\right) & = & \partial _{\mu }g_{A}\left( \mathbb{P}_{x^{\phi
^{\ast }}\left( T\right) };x^{\phi ^{\ast }}\left( T\right) \right) .%
\end{array}%
\right.  \label{adj1}
\end{equation}%
Clearly, under assumption (H1), the adjoint equation (\ref{adj1}) admits one
and only one $\mathcal{F}_{t}$-adapted strong solution $\left( p\left( \cdot
\right) ,q\left( \cdot \right) \right) \in L_{\mathcal{F}}^{2}\left( \left[
0,T\right] ;\mathbb{R}\right) \times L_{\mathcal{F}}^{2}\left( \left[ 0,T%
\right] ;\mathbb{R}^{m}\right) $ (see the work of Buckdahn et al \cite{blj16}%
) such that $\mathbb{E}\left[ \sup_{0\leq t\leq T}\left\vert p\left(
t\right) \right\vert ^{2}+\int_{0}^{T}\left\vert q\left( t\right)
\right\vert ^{2}\mathrm{d}t\right] <+\infty .$

\begin{theorem}
\label{the2} Suppose Assumptions 4.1 and 4.2 hold. Let $\phi ^{\ast }\in
\mathcal{A}$ be an optimal closed-loop strategy and $x^{\phi ^{\ast }}$ be
the corresponding trajectory, then we have for any $\phi \in \mathcal{A}$,%
\begin{equation}
\mathbb{E}\left[ \int_{0}^{T}\left( p\left( t\right) B\left( t\right)
+\sum_{j=1}^{m}q_{j}\left( t\right) D_{j}\left( t\right) \right) \phi
^{\prime }\left( t,x^{\phi ^{\ast }}\left( t\right) ,\mathbb{P}_{x^{\phi
^{\ast }}\left( t\right) }\right) \mathrm{d}t\right] \geq 0,  \label{r1}
\end{equation}%
where $(p,q)$ is the solution of the adjoint Eq. (\ref{adj1}).
\end{theorem}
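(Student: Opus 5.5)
The proof follows the classical duality route of the stochastic maximum principle. Fix $\phi\in\mathcal{A}$ and set $\phi''=\phi-\phi^{\ast}$; by convexity of $\mathcal{A}$, $\phi^{\varepsilon}=\phi^{\ast}+\varepsilon\phi''\in\mathcal{A}$ for $\varepsilon\in(0,1)$. In (\ref{r1}) the direction $\phi'$ is to be read as this admissible perturbation direction. Since $\phi^{\ast}$ is optimal, the difference quotient $\varepsilon^{-1}(J(\phi^{\varepsilon})-J(\phi^{\ast}))$ is nonnegative.

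\textbf{Step 1: the first-order expansion.} I invoke Lemma \ref{l1} to pass to the limit. This gives (\ref{var1}): the G\^{a}teaux derivative equals
\[
\mathbb{E}\big[\partial_{\mu}g_A(\mathbb{P}_{x^{\phi^{\ast}}(T)};x^{\phi^{\ast}}(T))\,y^{\phi^{\ast},\phi''}(T)\big]\ge 0 .
\]
The hatted and unhatted expressions agree because they are independent copies with the same law. The justification of $\partial_\mu g_A$ comes in three pieces.
\begin{itemize}
\item The mean and variance parts are smooth; their Lions derivatives are $-1$ and $\lambda(x-\bar\mu)$.
\item For the Wasserstein part, I use the Kantorovich representation (\ref{cost2}) at $\mu_T^{\ast}$. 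For the perturbed law, Theorem \ref{the1} gives $\mathcal{W}_2(\mu_T^{\varepsilon},\mu_A)^2\ge\int\varphi^{\ast}d\mu^{\varepsilon}_T-\int\psi^{\ast}d\mu_A$, which yields a one-sided bound in the correct direction for the lower estimate.
\item The reverse inequality uses the potentials of $\mu^{\varepsilon}_T$ together with their uniqueness up to constants, which holds since the support of $\mu_A$ is $\mathbb{R}$. Combined with the a.e.\ differentiability of $\varphi^{\ast}$ (Rademacher) and dominated convergence via Lipschitz bounds, this identifies the derivative as $\mathbb{E}[\partial_x\varphi^{\ast}(x^{\ast}(T))y(T)]$.
\end{itemize}

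\textbf{Step 2: duality with the adjoint.} With $p(T)=\partial_\mu g_A(\cdot)$ from (\ref{adj1}), I apply It\^{o}'s product formula to $p(t)y^{\phi^{\ast},\phi''}(t)$ on $[0,T]$ and take expectations. The stochastic integrals are true martingales because $p,y\in$ sup-$L^2$ and $q\in L^2_{\mathcal F}$. The terms linear in $y$ coming from $A+B\partial_x\phi^{\ast}$ and from the $D_j\partial_x\phi^{\ast}$ cross-variation with $q_j$ cancel directly against the drift of $p$.

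The mean-field terms need a Fubini swap on $\Omega\times\hat\Omega$. The term
\[
\mathbb{E}\,\hat{\mathbb{E}}\big[p(t)B\,\partial_\mu\phi^{\ast}(t,x(t),\mathbb{P};\hat x(t))\hat y(t)\big]
\]
equals, after exchanging the roles of the original and the copy, $\mathbb{E}\,\hat{\mathbb{E}}[\hat p(t)B\,\partial_\mu\phi^{\ast}(t,\hat x(t),\mathbb{P};x(t))y(t)]$. This is exactly the term placed in (\ref{adj1}); the same holds for the $q_j$ terms. Since $y(0)=0$, what remains is
\[
\mathbb{E}[p(T)y(T)]=\mathbb{E}\int_0^T\Big(p(t)B(t)+\sum_j q_j(t)D_j(t)\Big)\phi''(t,x^{\phi^{\ast}}(t),\mathbb{P}_{x^{\phi^{\ast}}(t)})\,dt .
\]
Combining this with Step 1 yields (\ref{r1}).

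\textbf{Main obstacle.} I expect the main difficulty to be Step 1, namely the differentiability of $\mu\mapsto\mathcal{W}_2(\mu,\mu_A)^2$ along the perturbation and the identification of its Lions derivative with $\partial_x\varphi^{\ast}$. I would first try a sandwich argument. The lower bound comes from the fixed potential $\varphi^{\ast}$, as in Step 1. The upper bound comes from the optimal potentials $\varphi^{\varepsilon}$ for $\mu_T^{\varepsilon}$, shown to converge to $\varphi^{\ast}$, with derivatives converging a.e., via stability of Kantorovich potentials and uniqueness. Absolute continuity of $\mu_T^{\ast}$ (from nondegeneracy of $\Pi(t)$) would ensure that $\partial_x\varphi^{\ast}(x^{\ast}(T))$ is well defined. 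The remaining steps are routine BSDE and It\^{o} bookkeeping.
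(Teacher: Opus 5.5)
Your proposal is correct and takes the paper's route. The paper also starts from the first-order inequality (\ref{var1}), then applies It\^{o}'s formula to $p(t)\,y^{\phi^{\ast},\phi''}(t)$ with Fubini on the product space (your swap of original and copy) to turn $\mathbb{E}[p(T)y(T)]$ into the integral in (\ref{r1}); your Step~1 sandwich only adds justification for the Wasserstein part of (\ref{var1}), which the paper simply asserts.

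Two corrections to that extra part. First, absolute continuity of $\mu_T^{\ast}$ does not follow from nondegeneracy of $\Pi(t)$, because the wealth diffusion $\sum_j D_j\phi^{\ast}$ can vanish (e.g.\ $\phi^{\ast}\equiv 0$ with deterministic $x_0$); it has to be assumed. Second, (\ref{r1}) needs only the upper bound on the difference quotient, and you can get it cheaply without the stability of the potentials $\varphi^{\varepsilon}$. Freeze an optimal coupling $(x^{\phi^{\ast}}(T),Y)$ of $(\mu_T^{\ast},\mu_A)$ and use $\mathcal{W}_2(\mu_T^{\varepsilon},\mu_A)^2\le\mathbb{E}|x^{\phi^{\varepsilon}}(T)-Y|^2$.
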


\begin{proof}

Applying It\^{o}'s formula to $p\left( t\right) \cdot y^{\phi ^{\ast },\phi
^{\prime }}\left( t\right) $ on $\left[ 0,T\right] $ and from Fubini's
theorem, we obtain%
\begin{eqnarray*}
&&\mathbb{E}\left[ p\left( T\right) \cdot y^{\phi ^{\ast },\phi ^{\prime
}}\left( T\right) \right] \\
&=&\mathbb{E}\left[ \int_{0}^{T}B\left( t\right) \phi ^{\prime }\left(
t,x^{\phi ^{\ast }}\left( t\right) ,\mathbb{P}_{x^{\phi ^{\ast }}\left(
t\right) }\right) p\left( t\right) +\sum_{j=1}^{m}D_{j}\left( t\right) \phi
^{\prime }\left( t,x^{\phi ^{\ast }}\left( t\right) ,\mathbb{P}_{x^{\phi
^{\ast }}\left( t\right) }\right) q_{j}\left( t\right) \mathrm{d}t\right] .
\end{eqnarray*}%
Note that
\begin{eqnarray*}
\mathbb{E}\left[ p\left( T\right) \cdot y^{\phi ^{\ast },\phi ^{\prime
}}\left( T\right) \right] &=&\mathbb{E}\left\{ \mathbb{\hat{E}}\left[
\partial _{\mu }g_{A}\left( \mathbb{P}_{x^{\phi ^{\ast }}\left( T\right)
};x^{\phi ^{\ast }}\left( T\right) \right) \right] \cdot y^{\phi ^{\ast
},\phi ^{\prime }}\left( T\right) \right\} \\
&=&\mathbb{E}\left\{ \mathbb{\hat{E}}\left[ \partial _{\mu }g_{A}\left(
\mathbb{P}_{x^{\phi ^{\ast }}\left( T\right) };\hat{x}^{\phi ^{\ast }}\left(
T\right) \right) \hat{y}^{\phi ^{\ast },\phi ^{\prime \prime }}\left(
T\right) \right] \right\} .
\end{eqnarray*}%
Therefore, from (\ref{var1}), we have%
\begin{equation*}
0\leq \mathbb{E}\left[ \int_{0}^{T}p\left( t\right) B\left( t\right) \phi
^{\prime }\left( t,x^{\phi ^{\ast }}\left( t\right) ,\mathbb{P}_{x^{\phi
^{\ast }}\left( t\right) }\right) +\sum_{j=1}^{m}q_{j}\left( t\right)
D_{j}\left( t\right) \phi ^{\prime }\left( t,x^{\phi ^{\ast }}\left(
t\right) ,\mathbb{P}_{x^{\phi ^{\ast }}\left( t\right) }\right) \mathrm{d}t%
\right] .
\end{equation*}%
The proof is thus complete.
\end{proof}

Next, we proceed to establish the stochastic Riccati equations and derive
the optimal closed-loop control strategy. For simplicity, assume that $%
\varphi ^{\ast }(x)=\alpha x,$ for some $\alpha \in \mathbb{R}$. Then, it
follows that $p(T)=\lambda x^{\phi ^{\ast }}\left( T\right) -\lambda \mathbb{%
E}\left[ x^{\phi ^{\ast }}\left( T\right) \right] -1+\alpha $.

\begin{remark}
This remark investigates the admissibility of explicit candidate potentials $%
(f,g)\in \mathscr{F}_{\mathrm{Lip}}$ for the $2$-Wasserstein duality. We
restrict to the one-dimensional Euclidean case $E=\mathbb{R}$ with standard
metric $\rho (x,y)=|x-y|$, subject to the fundamental constraint: A pair $%
(f,g)$ belongs to $\mathscr{F}_{\mathrm{Lip}}$ if and only if it satisfies
two indispensable requirements: (i) global Lipschitz continuity of both $f$
and $g$; (ii) the above pointwise inequality holds universally over $x,y\in
\mathbb{R}$. We now verify that linear functions are valid admissible
candidates for $p=2$, which resolves the confusion whether Lipschitz linear
potentials belong to $\mathscr{F}_{\mathrm{Lip}}$. We take the linear ansatz
$f(x)=kx$ for some constant slope $k\in \mathbb{R}$. For arbitrary fixed $%
y\in \mathbb{R}$, the duality constraint yields
\begin{equation*}
g(y)\leq \inf_{x\in \mathbb{R}}\big\{\rho ^{2}(x,y)-f(x)\big\}=\inf_{x\in
\mathbb{R}}\big\{(x-y)^{2}-kx\big\}.
\end{equation*}%
Direct calculus computation for the quadratic function $\phi
_{y}(x)=(x-y)^{2}-kx$ gives the global minimizer $x^{\ast }=y+\frac{k}{2}$.
Substitute back to derive the tightest feasible $g$:
\begin{equation*}
g(y)=-ky-\frac{k^{2}}{4}.
\end{equation*}%
Feasibility Verification (1) Global Lipschitz condition: Both potentials are
affine functions. Direct computation shows
\begin{equation*}
|f(x_{1})-f(x_{2})|=|k|\cdot |x_{1}-x_{2}|,\quad
|g(y_{1})-g(y_{2})|=|k|\cdot |y_{1}-y_{2}|,
\end{equation*}%
so $f,g$ are globally Lipschitz continuous with identical Lipschitz constant
$|k|$. (2) Pointwise duality constraint: Substitute the potential pair $%
(f,g)=(kx,-ky-\frac{k^{2}}{4})$. Let $t=x-y$, we obtain
\begin{equation*}
f(x)+g(y)=k(x-y)-\frac{k^{2}}{4}=kt-\frac{k^{2}}{4}\leq t^{2}=\rho ^{2}(x,y).
\end{equation*}%
The inequality holds trivially since $t^{2}-kt+\frac{k^{2}}{4}=(t-\frac{k}{2}%
)^{2}\geq 0$ for all $t\in \mathbb{R}$. Special Case: Take unit slope $k=1$,
we recover the explicit feasible pair
\begin{equation*}
f(x)=x,\qquad g(y)=-y-\frac{1}{4},
\end{equation*}%
which is the simplest non-trivial linear admissible pair for $2$-Wasserstein
duality. Notably, such linear pairs are legitimate elements in $\mathscr{F}_{%
\mathrm{Lip}}$, but they cannot achieve the supremum value unless the
optimal transport cost degenerates to a quadratic separable form.
\end{remark}

We introduce the following three classes of ODEs:
\begin{equation*}
\left\{
\begin{array}{rcl}
\mathrm{d}\Lambda \left( t\right) & = & \dot{\Lambda}\left( t\right) \mathrm{%
d}t,\text{ }0\leq t\leq T,\text{ }\Lambda \left( T\right) =\lambda , \\
\mathrm{d}\chi \left( t\right) & = & \dot{\chi}\left( t\right) \mathrm{d}t,%
\text{ }0\leq t\leq T,\text{ }\chi \left( T\right) =-\lambda , \\
\mathrm{d}\varphi \left( t\right) & = & \dot{\varphi}\left( t\right) \mathrm{%
d}t,\text{ }0\leq t\leq T,\text{ }\varphi \left( T\right) =\alpha -1,%
\end{array}%
\right.
\end{equation*}%
for some processes $\Lambda ,\chi ,\varphi ,\dot{\Lambda},\dot{\chi},$ and $%
\dot{\varphi}$ with values in $\mathbb{R}$, $Z^{\Lambda },$ $Z^{\Lambda },$
and $Z^{\varphi }$ with values in $\mathbb{R}^{m}$.

We conjecture that
\begin{equation*}
p\left( t\right) =\Lambda \left( t\right) x^{\phi ^{\ast }}(t)+\chi \left(
t\right) \mathbb{E}\left[ x^{\phi ^{\ast }}\left( t\right) \right] +\varphi
\left( t\right) .
\end{equation*}%
Then,
\begin{eqnarray}
&&-\Bigg \{\left( A\left( t\right) +B\left( t\right) \partial _{x}\phi
^{\ast }\left( t,x^{\phi ^{\ast }}\left( t\right) ,\mathbb{P}_{x^{\phi
^{\ast }}\left( t\right) }\right) \right) p\left( t\right)  \notag \\
&&+\mathbb{\hat{E}}\left[ B\left( t\right) \partial _{\mu }\phi ^{\ast
}\left( t,\hat{x}^{\phi ^{\ast }}\left( t\right) ,\mathbb{P}_{x^{\phi ^{\ast
}}\left( t\right) };x^{\phi ^{\ast }}\left( t\right) \right) \hat{p}\left(
t\right) \right]  \notag \\
&&+\sum_{j=1}^{m}D_{j}\left( t\right) \partial _{x}\phi ^{\ast }\left(
t,x^{\phi ^{\ast }}\left( t\right) ,\mathbb{P}_{x^{\phi ^{\ast }}\left(
t\right) }\right) q_{j}\left( t\right)  \notag \\
&&+\mathbb{\hat{E}}\Bigg [\sum_{j=1}^{m}D_{j}\left( t\right) \partial _{\mu
}\phi ^{\ast }\left( t,\hat{x}^{\phi ^{\ast }}\left( t\right) ,\mathbb{P}%
_{x^{\phi ^{\ast }}\left( t\right) };x^{\phi ^{\ast }}\left( t\right)
\right) \hat{q}_{j}\left( t\right) \Bigg ]\Bigg \}\mathrm{d}%
t+\sum_{j=1}^{m}q_{j}\left( t\right) \mathrm{d}W^{j}\left( t\right)  \notag
\\
&=&\mathrm{d}p\left( t\right)  \notag \\
&=&\Lambda \left( t\right) \left( \left[ A\left( t\right) x^{\phi ^{\ast
}}\left( t\right) +B\left( t\right) u^{\phi ^{\ast }}\left( t\right) \right]
\mathrm{d}t+\sum_{j=1}^{m}D_{j}\left( t\right) u^{\phi ^{\ast }}\left(
t\right) \mathrm{d}W^{j}\left( t\right) \right)  \notag \\
&&+x^{\phi ^{\ast }}\left( t\right) \dot{\Lambda}\left( t\right) \mathrm{d}%
t+\chi \left( t\right) \mathbb{E}\left[ A\left( t\right) x^{\phi ^{\ast
}}\left( t\right) +B\left( t\right) u^{\phi ^{\ast }}\left( t\right) \right]
\mathrm{d}t+\mathbb{E}\left[ x^{\phi ^{\ast }}\left( t\right) \right] \dot{%
\chi}\left( t\right) \mathrm{d}t+\dot{\varphi}\left( t\right) \mathrm{d}t.
\label{re1}
\end{eqnarray}%
Immediately, we derive that
\begin{equation*}
\sum_{j=1}^{m}q_{j}\left( t\right) \mathrm{d}W^{j}\left( t\right)
=\sum_{j=1}^{m}\Lambda \left( t\right) D_{j}\left( t\right) u^{\phi ^{\ast
}}\left( t\right) \mathrm{d}W^{j}\left( t\right) .
\end{equation*}%
That is
\begin{equation*}
q_{j}\left( t\right) =\Lambda \left( t\right) D_{j}\left( t\right) u^{\phi
^{\ast }}\left( t\right) ,\text{ }j=1,\ldots ,m.
\end{equation*}%
From Theorem \ref{the1}, it yields
\begin{eqnarray*}
0 &=&p\left( t\right) B^{\top }\left( t\right) +\sum_{j=1}^{m}D_{j}^{\top
}\left( t\right) q_{j}\left( t\right) \\
&=&\left( \Lambda \left( t\right) x^{\phi ^{\ast }}(t)+\chi \left( t\right)
\mathbb{E}\left[ x^{\phi ^{\ast }}\left( t\right) \right] +\varphi \left(
t\right) \right) B^{\top }\left( t\right) +\sum_{j=1}^{m}D_{j}^{\top }\left(
t\right) \Lambda \left( t\right) D_{j}\left( t\right) u^{\phi ^{\ast
}}\left( t\right) .
\end{eqnarray*}%
That is%
\begin{equation*}
\Lambda \left( t\right) \sum_{j=1}^{m}D_{j}^{\top }\left( t\right)
D_{j}\left( t\right) u^{\phi ^{\ast }}\left( t\right) =-\left( \Lambda
\left( t\right) x^{\phi ^{\ast }}(t)+\chi \left( t\right) \mathbb{E}\left[
x^{\phi ^{\ast }}\left( t\right) \right] +\varphi \left( t\right) \right)
B^{\top }\left( t\right) .
\end{equation*}%
So%
\begin{eqnarray}
u^{\phi ^{\ast }}\left( t\right) &=&-\Lambda \left( t\right) ^{-1}\Pi
^{-1}\left( t\right) \left( \Lambda \left( t\right) x^{\phi ^{\ast
}}(t)+\chi \left( t\right) \mathbb{E}\left[ x^{\phi ^{\ast }}\left( t\right) %
\right] +\varphi \left( t\right) \right) B^{\top }\left( t\right)  \notag \\
&=&-\Pi ^{-1}\left( t\right) B^{\top }\left( t\right) x^{\phi ^{\ast
}}\left( t\right) -\Lambda \left( t\right) ^{-1}\chi \left( t\right) \Pi
^{-1}\left( t\right) B^{\top }\left( t\right) \mathbb{E}\left[ x^{\phi
^{\ast }}\left( t\right) \right]  \notag \\
&&-\Lambda \left( t\right) ^{-1}\varphi \left( t\right) \Pi ^{-1}\left(
t\right) B^{\top }\left( t\right) .  \label{opcon}
\end{eqnarray}%
For simplicity, we set
\begin{eqnarray*}
\Upsilon _{1}\left( t\right) &=&-\Pi ^{-1}\left( t\right) B^{\top }\left(
t\right) , \\
\Upsilon _{2}\left( t\right) &=&-\Lambda \left( t\right) ^{-1}\chi \left(
t\right) \Pi ^{-1}\left( t\right) B^{\top }\left( t\right) , \\
\Upsilon _{3}\left( t\right) &=&-\Lambda \left( t\right) ^{-1}\varphi \left(
t\right) \Pi ^{-1}\left( t\right) B^{\top }\left( t\right) .
\end{eqnarray*}%
Therefore, we have
%Note that due to $D_{j}\left( \cdot \right) ,Z_{j}^{\Lambda }\left( \cdot
%\right) ,Z_{j}^{\chi }\left( \cdot \right) $, and $Z_{j}^{\varphi }\left(
%\cdot \right) $ are $\mathcal{F}_{t}$-adapted, the processes $\Upsilon
%_{1}\left( t\right) ,\Upsilon _{2}\left( t\right) ,\Upsilon _{3}\left(
%t\right) $, and $\Upsilon _{4}\left( t\right) $ are also $\mathcal{F}_{t}$%
%-adapted. Therefore, we have%
\begin{equation}
\phi ^{\ast }\left( t,x,\mu \right) =\Upsilon _{1}\left( t\right) x+\Upsilon
_{2}\left( t\right) \bar{\mu}+\Upsilon _{3}\left( t\right) .  \label{opcon2}
\end{equation}%
From (\ref{opcon2}), it is easy to compute
\begin{equation}
\partial _{x}\phi ^{\ast }\left( t,x,\mu \right) =\Upsilon _{1}\left(
t\right) ,\text{ }\partial _{\mu }\phi ^{\ast }\left( t,x,\mu ;\hat{x}%
\right) =\Upsilon _{2}\left( t\right) .  \label{opcon3}
\end{equation}%
Plugging (\ref{opcon3}) into (\ref{re1}), we derive that
\begin{eqnarray*}
&&-\Bigg \{\left( A\left( t\right) +B\left( t\right) \Upsilon _{1}\left(
t\right) \right) \left( \Lambda \left( t\right) x^{\phi ^{\ast }}(t)+\chi
\left( t\right) \mathbb{E}\left[ x^{\phi ^{\ast }}\left( t\right) \right]
+\varphi \left( t\right) \right) \\
&&+\mathbb{\hat{E}}\left[ B\left( t\right) \Upsilon _{2}\left( t\right)
\left( \Lambda \left( t\right) \hat{x}^{\phi ^{\ast }}(t)+\chi \left(
t\right) \mathbb{E}\left[ x^{\phi ^{\ast }}\left( t\right) \right] +\varphi
\left( t\right) \right) \right] \\
&&+\sum_{j=1}^{m}D_{j}\left( t\right) \Upsilon _{1}\left( t\right) \Lambda
\left( t\right) D_{j}\left( t\right) \left( \Upsilon _{1}\left( t\right)
x^{\phi ^{\ast }}\left( t\right) +\Upsilon _{2}\left( t\right) \mathbb{E}%
\left[ x^{\phi ^{\ast }}\left( t\right) \right] +\Upsilon _{3}\left(
t\right) \right) \\
&&+\mathbb{\hat{E}}\Bigg [\sum_{j=1}^{m}D_{j}\left( t\right) \Upsilon
_{2}\left( t\right) \Lambda \left( t\right) D_{j}\left( t\right) \left(
\Upsilon _{1}\left( t\right) \hat{x}^{\phi ^{\ast }}\left( t\right)
+\Upsilon _{2}\left( t\right) \mathbb{E}\left[ x^{\phi ^{\ast }}\left(
t\right) \right] +\Upsilon _{3}\left( t\right) \right) \Bigg ]\Bigg \} \\
&=&\Lambda \left( t\right) \left[ A\left( t\right) x^{\phi ^{\ast }}\left(
t\right) +B\left( t\right) u^{\phi ^{\ast }}\left( t\right) \right] \\
&&+\dot{\Lambda}\left( t\right) x^{\phi ^{\ast }}\left( t\right) +\chi
\left( t\right) \mathbb{E}\left[ A\left( t\right) x^{\phi ^{\ast }}\left(
t\right) +B\left( t\right) u^{\phi ^{\ast }}\left( t\right) \right] +\mathbb{%
E}\left[ x^{\phi ^{\ast }}\left( t\right) \right] \dot{\chi}\left( t\right)
\mathrm{d}t+\dot{\varphi}\left( t\right) \\
&=&\Lambda \left( t\right) \left[ A\left( t\right) x^{\phi ^{\ast }}\left(
t\right) +B\left( t\right) \left( \Upsilon _{1}\left( t\right) x^{\phi
^{\ast }}\left( t\right) +\Upsilon _{2}\left( t\right) \mathbb{E}\left[
x^{\phi ^{\ast }}\left( t\right) \right] +\Upsilon _{3}\left( t\right)
\right) \right] \\
&&+\dot{\Lambda}\left( t\right) x^{\phi ^{\ast }}\left( t\right) +\chi
\left( t\right) \mathbb{E}\left[ A\left( t\right) x^{\phi ^{\ast }}\left(
t\right) +B\left( t\right) \left( \Upsilon _{1}\left( t\right) x^{\phi
^{\ast }}\left( t\right) +\Upsilon _{2}\left( t\right) \mathbb{E}\left[
x^{\phi ^{\ast }}\left( t\right) \right] +\Upsilon _{3}\left( t\right)
\right) \right] \\
&&+\mathbb{E}\left[ x^{\phi ^{\ast }}\left( t\right) \right] \dot{\chi}%
\left( t\right) +\dot{\varphi}\left( t\right) ,
\end{eqnarray*}%
from which one can derive the following relations: for $x^{\phi ^{\ast
}}\left( t\right) $%
\begin{eqnarray}
&&-\left[ \left( A\left( t\right) +B\left( t\right) \Upsilon _{1}\left(
t\right) \right) \Lambda \left( t\right) +\sum_{j=1}^{m}D_{j}\left( t\right)
\Upsilon _{1}\left( t\right) \Lambda \left( t\right) D_{j}\left( t\right)
\Upsilon _{1}\left( t\right) \right] x^{\phi ^{\ast }}\left( t\right)  \notag
\\
&=&\left[ \Lambda \left( t\right) \left[ A\left( t\right) +B\left( t\right)
\Upsilon _{1}\left( t\right) \right] +\dot{\Lambda}\left( t\right) \right]
x^{\phi ^{\ast }}\left( t\right) ,  \label{ri1}
\end{eqnarray}%
for $\mathbb{E}\left[ x^{\phi ^{\ast }}\left( t\right) \right] $%
\begin{eqnarray}
&&-\left( A\left( t\right) +B\left( t\right) \Upsilon _{1}\left( t\right)
\right) \chi \left( t\right) \mathbb{E}\left[ x^{\phi ^{\ast }}\left(
t\right) \right] -B\left( t\right) \Upsilon _{2}\left( t\right) \left(
\Lambda \left( t\right) +\chi \left( t\right) \right) \mathbb{E}\left[
x^{\phi ^{\ast }}\left( t\right) \right]  \notag \\
&&-\sum_{j=1}^{m}D_{j}\left( t\right) \Upsilon _{1}\left( t\right) \Lambda
\left( t\right) D_{j}\left( t\right) \Upsilon _{2}\left( t\right) \mathbb{E}%
\left[ x^{\phi ^{\ast }}\left( t\right) \right]  \notag \\
&&-\sum_{j=1}^{m}D_{j}\left( t\right) \Upsilon _{2}\left( t\right) \Lambda
\left( t\right) D_{j}\left( t\right) \left( \Upsilon _{1}\left( t\right)
+\Upsilon _{2}\left( t\right) \right) \mathbb{E}\left[ x^{\phi ^{\ast
}}\left( t\right) \right]  \notag \\
&=&\Lambda \left( t\right) B\left( t\right) \Upsilon _{2}\left( t\right)
\mathbb{E}\left[ x^{\phi ^{\ast }}\left( t\right) \right] +\chi \left(
t\right) \left[ A\left( t\right) +B\left( t\right) \left( \Upsilon
_{1}\left( t\right) +\Upsilon _{2}\left( t\right) \right) \right] \mathbb{E}%
\left[ x^{\phi ^{\ast }}\left( t\right) \right]  \notag \\
&&+\dot{\chi}\left( t\right) \mathbb{E}\left[ x^{\phi ^{\ast }}\left(
t\right) \right] ,  \label{ri2}
\end{eqnarray}%
and
\begin{eqnarray}
&&-\Bigg \{\left( A\left( t\right) +B\left( t\right) \Upsilon _{1}\left(
t\right) \right) \varphi \left( t\right) +B\left( t\right) \Upsilon
_{2}\left( t\right) \varphi \left( t\right)  \notag \\
&&+\sum_{j=1}^{m}D_{j}\left( t\right) \Upsilon _{1}\left( t\right) \Lambda
\left( t\right) D_{j}\left( t\right) \Upsilon _{3}\left( t\right)
+\sum_{j=1}^{m}D_{j}\left( t\right) \Upsilon _{2}\left( t\right) \Lambda
\left( t\right) D_{j}\left( t\right) \Upsilon _{3}\left( t\right) \Bigg \}
\notag \\
&=&\Lambda \left( t\right) B\left( t\right) \Upsilon _{3}\left( t\right)
+\chi \left( t\right) B\left( t\right) \Upsilon _{3}\left( t\right) +\dot{%
\varphi}\left( t\right) .  \label{ri3}
\end{eqnarray}%
Based on (\ref{ri1})-(\ref{ri3}), one can derive (suppressing $t$ in the
functions)%
\begin{equation}
\left\{
\begin{array}{rcl}
\dot{\Lambda}+\left[ 2\left( A-B\Pi ^{-1}B^{\top }\right)
+\sum_{j=1}^{m}\left( D_{j}\Pi ^{-1}B^{\top }\right) ^{2}\right] \Lambda & =
& 0, \\
\Lambda \left( T\right) & = & \lambda ,%
\end{array}%
\right.  \label{ric1}
\end{equation}%
\begin{equation}
\left\{
\begin{array}{rcl}
\dot{\chi}+2\left[ A-2B\Pi ^{-1}B^{\top }+\sum_{j=1}^{m}\left( D_{j}\Pi
^{-1}B^{\top }\right) ^{2}\right] \chi &  &  \\
+\Lambda ^{-1}\left[ \sum_{j=1}^{m}\left( D_{j}\Pi ^{-1}B^{\top }\right)
^{2}-2B\Pi ^{-1}B^{\top }\right] \chi ^{2} & = & 0, \\
\chi \left( T\right) & = & -\lambda ,%
\end{array}%
\right.  \label{ric2}
\end{equation}%
and
\begin{equation}
\left\{
\begin{array}{rcl}
\dot{\varphi}+\Big [A-2B\Pi ^{-1}B^{\top }-2\Lambda ^{-1}\chi B\Pi
^{-1}B^{\top } &  &  \\
+\sum_{j=1}^{m}\left( D_{j}\Pi ^{-1}B^{\top }\right) ^{2}+\Lambda ^{-1}\chi
\sum_{j=1}^{m}\left( D_{j}\Pi ^{-1}B^{\top }\right) ^{2}\Big ]\varphi & = &
0, \\
\varphi \left( T\right) & = & \alpha -1.%
\end{array}%
\right.  \label{ric3}
\end{equation}

%
%
%
%
%
%
%
%
%
%
%
%
%
%
%
%\begin{eqnarray}
%&&-\Bigg \{\left( A-B\Pi ^{-1}B^{\top }\right) \varphi -\chi B\Lambda
%^{-1}\Pi ^{-1}B^{\top }\varphi +\sum_{j=1}^{m}D_{j}\Pi ^{-1}B^{\top
%}D_{j}\Pi ^{-1}B^{\top }\varphi   \notag \\
%&&+\chi \Lambda ^{-1}\sum_{j=1}^{m}D_{j}\Pi ^{-1}B^{\top }D_{j}\Pi
%^{-1}B^{\top }\varphi \Bigg \} \\
%&=&-B\Pi ^{-1}B^{\top }\varphi -\chi \Lambda ^{-1}B\Pi ^{-1}B^{\top }\varphi
%+\dot{\varphi}.
%\end{eqnarray}%
%
%\begin{eqnarray}
%&&-\left( A-B\Pi ^{-1}B^{\top }\right) \chi +B\Lambda ^{-1}\chi \Pi
%^{-1}B^{\top }\left( \Lambda +\chi \right)   \notag \\
%&&-\sum_{j=1}^{m}D_{j}\Pi ^{-1}B^{\top }D_{j}\Pi ^{-1}B^{\top }\chi   \notag
%\\
%&&-\sum_{j=1}^{m}D_{j}\Pi ^{-1}B^{\top }D_{j}\left( \Pi ^{-1}B^{\top
%}+\Lambda ^{-1}\chi \Pi ^{-1}B^{\top }\right) \chi   \notag \\
%&=&-B\Pi ^{-1}B^{\top }\chi +\chi \left[ A-B\left( \Pi ^{-1}B^{\top
%}+\Lambda ^{-1}\Pi ^{-1}B^{\top }\chi \right) \right] +\dot{\chi},
%\end{eqnarray}%
%\begin{eqnarray}
%&&-2\sum_{j=1}^{m}D_{j}\Pi ^{-1}B^{\top }D_{j}\Pi ^{-1}B^{\top }\chi  \\
%&&-\sum_{j=1}^{m}D_{j}\Pi ^{-1}B^{\top }D_{j}\Lambda ^{-1}\Pi ^{-1}B^{\top
%}\chi ^{2}  \notag \\
%&=&2\left( A-B\Pi ^{-1}B^{\top }\right) \chi -2B\Pi ^{-1}B^{\top }\chi
%-2\Lambda ^{-1}B\Pi ^{-1}B^{\top }\chi ^{2}+\dot{\chi},
%\end{eqnarray}

\section{Concluding remarks}

\label{sect4}

This paper rigorously addresses the continuous-time mean--variance portfolio
selection problem by unifying BSDE theory with Kantorovich duality. We
develop a novel analytical framework tailored to feedback control design for
McKean--Vlasov stochastic systems. Through the joint resolution of three
coupled systems of ODEs, we obtain an explicit, closed-loop optimal
investment policy characterized entirely in terms of observable state
variables. Hopefully, deep learning-based numerical methods are a possible
direction for future work.

\section*{Conflict of Interest}

The authors declare no conflict of interest.

\end{document}